\theoremstyle{thmstyleone}%
\newtheorem{theorem}{Theorem}%  meant for continuous numbers
\theoremstyle{thmstyletwo}%
\theoremstyle{thmstylethree}%
\newtheorem{definition}{Definition}%
\newcommand{\mbf}[1]{\mathbf{#1}}
\newcommand{\mbb}[1]{\mathbb{#1}}
\newcommand{\mcal}[1]{\mathcal{#1}}
\newcommand{\clr}{\mathrm{clr}}
\newcommand{\ind}{\mathrm{ind}}
\newcommand{\inte}{\mathrm{int}}
\newcommand{\sym}{\mathrm{sym}}
\newcommand{\skw}{\mathrm{skew}}
\newcommand{\syind}{\mathrm{syind}}
\newcommand{\skind}{\mathrm{skind}}
\newcommand{\syint}{\mathrm{syint}}
\newcommand{\skint}{\mathrm{skint}}
\newcommand{\qs}{\mathrm{QS}}
\newcommand{\gmh}{\mathrm{GMH}}
\providecommand\given{}
\newcommand\SetSymbol[1][]{%
  \nonscript\:#1\vert
  \allowbreak
  \nonscript\:
  \mathopen{}}
  \DeclarePairedDelimiterX\Set[1]\{\}{%
  \renewcommand\given{\SetSymbol[\delimsize]}
  #1
}
\begin{document}

\title[Quasi-symmetry and geometric marginal homogeneity: A simplicial approach to square contingency tables]{Quasi-symmetry and geometric marginal homogeneity: A simplicial approach to square contingency tables}

%%=============================================================%%
%% GivenName	-> \fnm{Joergen W.}
%% Particle	-> \spfx{van der} -> surname prefix
%% FamilyName	-> \sur{Ploeg}
%% Suffix	-> \sfx{IV}
%% \author*[1,2]{\fnm{Joergen W.} \spfx{van der} \sur{Ploeg} 
%%  \sfx{IV}}\email{iauthor@gmail.com}
%%=============================================================%%

\author*[1]{\fnm{Keita} \sur{Nakamura}}\email{6324703@ed.tus.ac.jp}

\author[2,3]{\fnm{Tomoyuki} \sur{Nakamgawa}}\email{tomoyuki.nakagawa@meisei-u.ac.jp}
%\equalcont{These authors contributed equally to this work.}

\author[1]{\fnm{Kouji} \sur{Tahata}}\email{kouji\_tahata@rs.tus.ac.jp}
%\equalcont{These authors contributed equally to this work.}

\affil*[1]{\orgdiv{Department of Information Sciences}, \orgname{Tokyo University of Science}, \orgaddress{\city{Noda City}, \state{Chiba}, \postcode{278-8510}, \country{Japan}}}

\affil[2]{\orgdiv{School of Data Science}, \orgname{Meisei University}, \orgaddress{\city{Hino City}, \state{Tokyo}, \postcode{191-8506}, \country{Japan}}}

\affil[3]{\orgdiv{Statistical Mathematics Unit}, \orgname{RIKEN Center for Brain Science}, \orgaddress{\city{Wako City}, \state{Saitama}, \postcode{351-0198}, \country{Japan}}}
%\affil[3]{\orgdiv{Statistical Mathematics Unit}, \orgname{RIKEN Center for Brain Science}, \orgaddress{\street{Street}, \city{Wako City}, \state{Saitama}, \postcode{351-0198}, \country{Japan}}}

%%==================================%%
%% Sample for unstructured abstract %%
%%==================================%%

\abstract{
Square contingency tables are traditionally analyzed with a focus on the symmetric structure of the corresponding probability tables.
%We view probability tables as elements of a simplex equipped with the Aitchison geometry.
We view probability tables as elements of a simplex equipped with the Aitchison geometry.
%This perspective allows us to present a novel approach to analyzing both symmetric and skew-symmetric structures using a compositionally coherent framework.
This perspective allows us to present a novel approach to analyzing symmetric structure using a compositionally coherent framework.
%We present a geometric interpretation of quasi-symmetry, equivalent to the definition proposed by Caussinus (1965), and introduce a new concept called geometric marginal homogeneity.
We present a geometric interpretation of quasi-symmetry as an e-flat subspace and introduce a new concept called geometric marginal homogeneity, which is also characterized as an e-flat structure.
%This concept offers an alternative perspective on table structures and is less restrictive than symmetry.
%We prove that both quasi-symmetric tables and geometric marginal homogeneous tables form subspaces in the simplex.
%Furthermore, we demonstrate that the measure of asymmetry in Aitchison geometry can be orthogonally decomposed into measures of departure from geometric marginal homogeneity and quasi-symmetry.
We prove that both quasi-symmetric tables and geometric marginal homogeneous tables form subspaces in the simplex, and demonstrate that the measure of skew-symmetry in Aitchison geometry can be orthogonally decomposed into measures of departure from quasi-symmetry and geometric marginal homogeneity.
%Based on these measures, we develop methods to test for geometric marginal homogeneity and quasi-symmetry.
We illustrate the application and effectiveness of our proposed methodology using data on unaided distance vision from a sample of women.
}

\keywords{Contingency tables, Symmetry, Quasi-symmetry, Marginal homogeneity, Aitchison geometry, Compositional data analysis}

%%\pacs[JEL Classification]{D8, H51}

%%\pacs[MSC Classification]{35A01, 65L10, 65L12, 65L20, 65L70}

\maketitle

\section{Introduction}\label{sec1}
A contingency table (CT) is a tabular summary of categorical data, where each cell represents the frequency of observations falling into a specific combination of row and column categories.
%When the row and column variables have the same categories, an $I \times I$ square CT is formed.
When the row and column variables have the same $I$ categories, an $I \times I$ square CT is formed with the $(i, j)$-th cell, denoted by $n_{ij}$, representing the frequency of observations falling into the $i$-th row and $j$-th column categories.
The corresponding probability table (PT) consists of elements $p_{ij}$, which represents the probability of an observation falling into the $(i, j)$-th cell, with $\sum_{i=1}^{I}\sum_{j=1}^{I}p_{ij}=1$.

One of the most important aspects of the analysis of square CTs is the study of symmetry in the corresponding PTs, because it indicates the exchangeability between the row and column variables.
Symmetry requires the equality of the probabilities of symmetric cells, i.e.,
\[
  p_{ij}=p_{ji}\quad(i=1,\ldots,I;j=1,\ldots,I).
\]
\citet{bowker_test_1948} first formalized the concept of symmetry in square CTs by proposing a statistical test for the symmetry of PTs.
%Building upon Bowker's work, \citet{menendez_tests_2001} expanded the approach by introducing a family of test statistics based on $\phi$-divergence, which encompasses both the likelihood ratio test and the Pearson $\chi^{2}$ test statistics as special cases.
%This generalization provided a more comprehensive framework for testing symmetry in CTs.

The symmetry assumption is often too limiting for practical data. To address this limitation, various less restrictive structures have been suggested, including marginal homogeneity \citep{stuart_test_1955} and quasi-symmetry \citep{caussinus_contribution_1965}.
Marginal homogeneity only requires the equality of the marginal distributions of rows and columns, that is,
\[
  p_{i+}=p_{+i}\quad(i=1,\ldots,I),
\]
where $p_{i+}=\sum_{j=1}^{I}p_{ij}$ and $p_{+i}=\sum_{j=1}^{I}p_{ji}$ are the marginal probabilities of the $i$th row and column, respectively.
Quasi-symmetry represents the symmetry of association in the table, as reflected in the symmetry of local odds ratios.
Let $\theta_{ij}$ denote the local odds ratio for the $2\times2$ subtable formed by the $i$-th and $(i+1)$-th rows and the $j$-th and $(j+1)$-th columns:
\[
  \theta_{ij} = \frac{p_{ij}p_{i+1,j+1}}{p_{i,j+1}p_{i+1,j}}\quad(i=1,\ldots,I-1;j=1,\ldots,I-1).
\]
Quasi-symmetry requires that these local odds ratios are symmetric, that is,
\begin{equation}
  \label{eq1}
  \theta_{ij} = \theta_{ji}\quad(i=1,\ldots,I-1;j=1,\ldots,I-1).
\end{equation}
This condition allows for different marginals while maintaining symmetry in the associational structure of the table.

\citet{caussinus_contribution_1965} showed that a square PT satisfies symmetry if and only if it satisfies both marginal homogeneity and quasi-symmetry.
These structures can be understood from an information-geometric viewpoint.
The symmetric structure forms not only an e-flat but also an m-flat submanifold of the probability simplex equipped with the Fisher metric.
The quasi-symmetric structure forms an e-flat submanifold, while the marginal homogeneous structure forms an m-flat submanifold.
These two submanifolds create a foliation of the probability simplex, with their intersection precisely corresponding to the symmetric structure.
%These structures provide a foundation for more flexible modeling of square CTs, capturing important aspects of symmetry while allowing for deviations that often occur in real-world data.
These structures provide a foundation for more flexible structure of square CTs, capturing important aspects of symmetry while allowing for deviations that often occur in real-world data.

Over the past few decades, the field of compositional data analysis has seen significant advances, particularly with the introduction of an algebraic-geometric structure known as Aitchison geometry on the simplex \citep{aitchison_statistical_1982, 10.5555/17272, pawlowsky-glahn_geometric_2001}. 
This geometric framework provides a coherent approach to analyzing data that represent parts of a whole and are constrained to sum to a constant, such as $1$ or $100\%$. 
By treating PTs as compositional data, we can apply these analytical tools to the study of square CTs.
\citet{egozcue_compositional_2008, egozcue_independence_2015} pioneered this approach by examining independence and interaction between row and column variables in CTs using Aitchison geometry.
\citet{facevicova_logratio_2014, facevicova_compositional_2016, facevicova_general_2018} developed and refined a comprehensive approach to the coordinate representation of compositional tables based on Aitchison geometry, introducing orthonormal coordinate systems that decompose tables into independent and interaction parts.

%Regarding the symmetric structure, \citet{nakamura_symmetry_2024} introduced a geometric method for analyzing symmetry in square PTs, demonstrating the potential of this approach.
Regarding the symmetric structure, \citet{nakamura_symmetry_2024} introduced an Aitchison geometric method for analyzing symmetry in square PTs, demonstrating the potential of this approach.
However, even within this framework, the concept of symmetry often proves too restrictive for real-world data.
Many practical situations exhibit more subtle patterns that deviate from strict symmetry while still maintaining certain structural regularities.
This realization underscores the need for a more refined analysis of table structures that can capture and quantify these subtle deviations from complete symmetry, potentially offering new insights into the underlying processes generating the data.

%In this paper, we present a novel approach to analyzing both symmetric and skew-symmetric structures in square CTs using Aitchison geometry on the simplex.
In this paper, we develop the approach of \citet{nakamura_symmetry_2024} to analyzing symmetric structures in square CTs using Aitchison geometry on the simplex.
%We provide a geometric interpretation of quasi-symmetry, equivalent to the definition proposed by \citet{caussinus_contribution_1965}, and introduce a new concept called geometric marginal homogeneity.
We offer a geometric perspective on quasi-symmetry, equivalent to the definition suggested by \citet{caussinus_contribution_1965}, and present the novel idea of geometric marginal homogeneity.
This concept offers an alternative perspective on table structures and is less restrictive than symmetry.
%We prove that both quasi-symmetric tables and geometric marginal homogeneous tables form subspaces in the simplex.
%Furthermore, we demonstrate that the measure of asymmetry in Aitchison geometry can be orthogonally decomposed into measures of departure from quasi-symmetry and geometric marginal homogeneity.
We establish that quasi-symmetric tables and geometric marginal homogeneous tables constitute subspaces within the simplex. Additionally, we show that in Aitchison geometry, the measure of skewness can be decomposed orthogonally into components that represent deviations from quasi-symmetry and geometric marginal homogeneity.

In Section \ref{sec2}, we introduce the Aitchison geometry and its application to the analysis of square PTs.
Section \ref{sec3} defines quasi-symmetry and geometric marginal homogeneity structures, presents their properties, and demonstrates the orthogonal decomposition of square PTs.
%In Section \ref{sec4}, we introduce the measures of departure from quasi-symmetry and geometric marginal homogeneity and propose a testing method based on these measures.
Section \ref{sec5} provides an illustrative example using data on unaided distance vision to demonstrate the proposed methodology.
Finally, Section \ref{sec6} concludes the paper and discusses potential future research directions.
All proofs are deferred to the Appendix.

\section{Aitchison Geometry and Decomposition of Probability Tables}\label{sec2}

Before turning to the technical developments, we briefly outline the flow of this section.  
Subsection \ref{subsec2.1} revisits the algebraic operations of Aitchison geometry, the centered log-ratio (clr) transformation, and the resulting Aitchison metric that endows the simplex with a Euclidean metric.
Subsection \ref{subsec2.2} presents the result of \citet{egozcue_independence_2015}, which demonstrates how any $I\times J$ probability table can be orthogonally decomposed into independent and interaction components.
Subsection \ref{subsec2.3} explains the result of \citet{nakamura_symmetry_2024}, which presents a second decomposition into symmetric and skew-symmetric parts.
These two decompositions together provide the geometric toolkit used throughout the remainder of the paper.

\subsection{Fundamentals of Aitchison Geometry}\label{subsec2.1}
Consider an $I\times J$ CT obtained from multinomial sampling.
Let $\mbf{P} = (p_{ij})$ denote the vector of its associated PTs, whose probabilities are assumed to be strictly positive and add up to $1$.
Therefore, the table is regarded as an element of a simplex,
\[
  \mcal{S}^{IJ} = \Set[\bigg]{\mbf{P} = (p_{11}, p_{12}, \ldots, p_{1J}, \ldots, p_{IJ})\in\mbb{R}^{IJ}\given p_{ij}>0, \sum_{i=1}^{I}\sum_{j=1}^{J}p_{ij} = 1}.
\]

To define operations on elements of the simplex, we use the closure operator,
\[
  \mcal{C}\mbf{R} = \left(\frac{r_{11}}{\sum_{i=1}^{I}\sum_{j=1}^{J}r_{ij}}, \ldots, \frac{r_{IJ}}{\sum_{i=1}^{I}\sum_{j=1}^{J}r_{ij}}\right),
\]
where $\mbf{R} \in \Set*{(r_{11}, r_{12}, \ldots, r_{IJ})\in\mbb{R}^{IJ}\given r_{ij} > 0}$.
%\citet{aitchison_statistical_1982, 10.5555/17272} introduced an operation called \textit{perturbation} between elements of a simplex and \textit{powering} as a repeated perturbation.
\citet{aitchison_statistical_1982, 10.5555/17272} introduced perturbation as an addition operation and powering as scalar multiplication in compositional data analysis.
%For two PTs, $\mbf{R}$ and $\mbf{P}$, and a real number, $\alpha$, the perturbation, $\oplus$, and powering, $\odot$, are respectively given as
For two PTs $\mbf{P}$ and $\mbf{Q}$, and a real number $\alpha$, the perturbation and powering are respectively given as
\begin{align*}
  &\mbf{P}\oplus\mbf{Q} = \mcal{C}(p_{11}q_{11},\ldots p_{IJ}q_{IJ}), \\
  &\alpha\odot\mbf{P} = \mcal{C}(p_{11}^{\alpha},\ldots,p_{IJ}^{\alpha}).
\end{align*}
The simplex $\mcal{S}^{IJ}$ equipped with $\oplus$ and $\odot$ is an $(IJ-1)$-dimensional vector space \citep{billheimer_statistical_2001, pawlowsky-glahn_geometric_2001}.

%For any PT $\mbf{P}\in\mcal{S}^{IJ}$, the $I\times J$ table, $\clr(\mbf{P})$, whose entries
For any PT $\mbf{P}\in\mcal{S}^{IJ}$, the $I\times J$ table, denoted by $\clr(\mbf{P})$, whose entries
\[
  \clr_{ij}(\mbf{P}) = \log\frac{p_{ij}}{g(\mbf{P})},\quad g(\mbf{P})=\left(\prod_{i=1}^{I}\prod_{j=1}^{J}p_{ij}\right)^{1/IJ},
\]
is called the centered log-ratio (clr) transformation of $\mbf{P}$.
This transformation is an isomorphism from $\mcal{S}^{IJ}$ to $U\subset\mbb{R}^{IJ}$ where tables have elements adding up to $0$.
%Therefore, $\clr(\mbf{P})$ is not full rank and spans a subspace of $(IJ-1)$-dimensional hyperplane in $\mbb{R}^{IJ}$.
Therefore, $\clr(\mbf{P})$ is not full rank and spans a subspace of $(IJ-1)$-dimensional hyperplane called clr-plane in $\mbb{R}^{IJ}$.
The inverse of this transformation is given as
\[
  \mbf{P} = \mcal{C}\exp[\clr(\mbf{P})],
\]
where $\exp$ operates componentwise.

Using clr transformation, the Aitchison inner product $\langle\cdot, \cdot\rangle_{A}$, the norm $\|\cdot\|_{A}$, and the distance $d_{A}(\cdot, \cdot)$, are defined as
\[
  \langle\cdot, \cdot\rangle_{A} = \langle\clr(\cdot), \clr(\cdot)\rangle,\ \|\cdot\|_{A} = \|\clr(\cdot)\|,\ d_{A}(\cdot, \cdot) = d(\clr(\cdot), \clr(\cdot)),
\]
where $\langle\cdot, \cdot\rangle$, $\|\cdot\|$, and $d(\cdot, \cdot)$ denote the ordinary Euclidean inner product, norm, and distance in $\mbb{R}^{IJ}$, respectively.
In addition to perturbation and powering, the simplex equipped with this metric is a $(IJ-1)$-dimensional Euclidean space \citep{billheimer_statistical_2001, pawlowsky-glahn_geometric_2001}.
Hence, it is possible to sum one table with another, compute the distance and norm of PTs, and orthogonally project a PT onto a given linear subspace.

%The Aitchison geometry framework established above provides the foundation for our analysis of PTs.
It is worth noting that this geometric structure can also be interpreted within the context of information geometry.
\citet{erb_power_2023} observed an important connection: the clr-plane, which is central to Aitchison geometry, corresponds to the tangent space of the simplex in information-geometric terms.
From an information-geometric perspective, the straight lines in the clr-plane correspond to e-geodesics in the original simplex.
For two PTs $\mbf{P}$ and $\mbf{Q}$ in the simplex, and a parameter $\lambda \in [0,1]$, the e-geodesic connecting them can be expressed using perturbation and powering as
\[
  \gamma_e(\lambda) = (1-\lambda) \odot \mbf{P} \oplus \lambda \odot \mbf{Q} = \mathcal{C}(p_{11}^{1-\lambda}q_{11}^{\lambda}, \ldots, p_{IJ}^{1-\lambda}q_{IJ}^{\lambda}).
\]
This expression shows how movement along e-geodesics can be represented through the operations of Aitchison geometry.
In the clr-plane, this corresponds to a linear combination,
\[
  \clr(\gamma_e(\lambda)) = (1-\lambda)\clr(\mbf{P}) + \lambda\clr(\mbf{Q}).
\]
While our subsequent analysis primarily employs Aitchison geometry, this connection highlights how the same compositional relationships could alternatively be viewed through the dual structure of information geometry.

In Aitchison geometry, due to the closure operation, only relative values carry meaningful information.
Consequently, our analysis focuses on the ratios between cell probabilities, disregarding normalization constants.
This approach captures the essential structural information of the table while remaining invariant to scaling.
In subsequent analyses, we consider primarily these probability ratios, as they contain all relevant information in this geometric context.

\subsection{Decomposition into Independent and Interaction Components}\label{subsec2.2}
\citet{egozcue_compositional_2008} presented projections that explain the ratios between different rows and columns.
%This leads to projections onto subspaces $S^{IJ}(row^\perp)$ and $S^{IJ}(col^\perp)$, which are orthogonal complements to row and column subspaces, respectively.
This leads to projections onto subspaces $S^{IJ}(row)$ and $S^{IJ}(col)$, which represent row and column subspaces, respectively.
These projections result in
\[
%  row^\perp(\mbf{P}) = \mcal{C}
%  \begin{pmatrix}
%    g(row_{1}[\mbf{P}]) & g(row_{1}[\mbf{P}]) & \cdots & g(row_{1}[\mbf{P}]) \\
%    g(row_{2}[\mbf{P}]) & g(row_{2}[\mbf{P}]) & \cdots & g(row_{2}[\mbf{P}]) \\
%    \vdots & \vdots & \ddots & \vdots \\
%    g(row_{I}[\mbf{P}]) & g(row_{I}[\mbf{P}]) & \cdots & g(row_{I}[\mbf{P}])
%  \end{pmatrix},
  row(\mbf{P}) = \mcal{C}
  \begin{pmatrix}
    g(row_{1}[\mbf{P}]) & g(row_{1}[\mbf{P}]) & \cdots & g(row_{1}[\mbf{P}]) \\
    g(row_{2}[\mbf{P}]) & g(row_{2}[\mbf{P}]) & \cdots & g(row_{2}[\mbf{P}]) \\
    \vdots & \vdots & \ddots & \vdots \\
    g(row_{I}[\mbf{P}]) & g(row_{I}[\mbf{P}]) & \cdots & g(row_{I}[\mbf{P}])
  \end{pmatrix},
\]
and
\[
%  col^\perp(\mbf{P}) = \mcal{C}
%  \begin{pmatrix}
%  g(col_{1}[\mbf{P}]) & g(col_{2}[\mbf{P}]) & \cdots & g(col_{J}[\mbf{P}]) \\
%  g(col_{1}[\mbf{P}]) & g(col_{2}[\mbf{P}]) & \cdots & g(col_{J}[\mbf{P}]) \\
%  \vdots & \vdots & \ddots & \vdots \\
%  g(col_{1}[\mbf{P}]) & g(col_{2}[\mbf{P}]) & \cdots & g(col_{J}[\mbf{P}])
%  \end{pmatrix}
  col(\mbf{P}) = \mcal{C}
  \begin{pmatrix}
  g(col_{1}[\mbf{P}]) & g(col_{2}[\mbf{P}]) & \cdots & g(col_{J}[\mbf{P}]) \\
  g(col_{1}[\mbf{P}]) & g(col_{2}[\mbf{P}]) & \cdots & g(col_{J}[\mbf{P}]) \\
  \vdots & \vdots & \ddots & \vdots \\
  g(col_{1}[\mbf{P}]) & g(col_{2}[\mbf{P}]) & \cdots & g(col_{J}[\mbf{P}])
  \end{pmatrix},
\]
where $g(row_{i}[\mbf{P}])$ and $g(col_{j}[\mbf{P}])$ denote the geometric mean of elements in the $i$-th row and $j$-th column of $\mbf{P}$, respectively.
%According to \citet{egozcue_compositional_2008}, the projections $row^\perp(\mbf{P})$ and $col^\perp(\mbf{P})$ are orthogonal not only to all row or column projections respectively, but also to each other.
According to \citet{egozcue_compositional_2008}, the projections $row(\mbf{P})$ and $col(\mbf{P})$ are orthogonal to each other.
%This orthogonality enables a complete decomposition of the $\mbf{P}$ into its constituent parts.
%\[
%\mbf{P} = row^\perp(\mbf{P}) \oplus \left(\bigoplus_{i=1}^{I} row_{i}(\mbf{P})\right) = col^\perp(\mbf{P}) \oplus \left(\bigoplus_{j=1}^{J} col_{j}(\mbf{P})\right).
%\]
%Here, $row^\perp(\mbf{P})$ and $col^\perp(\mbf{P})$ can be interpreted as geometric marginals.
Here, $row(\mbf{P})$ and $col(\mbf{P})$ can be interpreted as geometric marginals.
%These projections carry information exclusively about ratios between parts of different rows and columns, respectively.

From an information-geometric perspective, these geometric marginals represent the projection of the PT onto e-flat subspaces in the simplex.
When viewed in the clr-plane, these projections become linear operations
\[
  \clr_{ij}(row(\mbf{P})) = \log\frac{g(row_i[\mbf{P}])}{g(\mbf{P})} = \frac{1}{J}\sum_{j=1}^{J}\clr_{ij}(\mbf{P}),
\]
and
\[
  \clr_{ij}(col(\mbf{P})) = \log\frac{g(col_j[\mbf{P}])}{g(\mbf{P})} = \frac{1}{I}\sum_{i=1}^{I}\clr_{ij}(\mbf{P}).
\]
While traditional arithmetic marginals correspond to m-flat structures in information geometry, geometric marginals form their natural counterparts as e-flat structures.
In the framework of Aitchison geometry, which emphasizes relative relationships between cells, geometric marginals provide a more natural characterization of the underlying structure.

%The geometric marginals play a crucial role in analyzing the independence structure of the table, as they coincide with conventional arithmetic marginals when the condition of independence in the PT is satisfied \citep{egozcue_independence_2015}.
Geometric marginals are key in analyzing independence between categorical variables as they coincide with arithmetic marginals under independence \citep{egozcue_independence_2015}.
%When row and column variables exhibit independence, the geometric marginals $row^\perp(\mbf{P})$ and $col^\perp(\mbf{P})$ contain all the necessary information to fully reconstruct the original PT.
Therefore, if row and column variables are independent, the geometric marginals $row(\mbf{P})$ and $col(\mbf{P})$ alone suffice to reconstruct the original PT.
The resulting $I \times J$ PT, obtained as a perturbation of these two projections, is called the independent PT,
\[
%  \mbf{P}_{\ind} = row^\perp(\mbf{P}) \oplus col^\perp(\mbf{P}),
  \mbf{P}_{\ind} = row(\mbf{P}) \oplus col(\mbf{P}),
\]
with related cells
\[
  p_{ij}^{\ind}\propto\left(\prod_{k=1}^I \prod_{l=1}^J p_{kj}p_{il}\right)^{1/IJ}.
\]
\citet{egozcue_compositional_2008, egozcue_independence_2015} demonstrated that the set of independent PTs constitutes a linear subspace, denoted as $\mcal{S}^{IJ}_{\ind}$.
%Because the dimensions of $S^{IJ}(row^\perp)$ and $S^{IJ}(col^\perp)$ are $I-1$ and $J-1$, respectively, the dimension of $S^{IJ}_{\ind}$ equals $I+J-2$.
Because the dimensions of $S^{IJ}(row)$ and $S^{IJ}(col)$ are $I-1$ and $J-1$, respectively, the dimension of $S^{IJ}_{\ind}$ equals $I+J-2$.

Leveraging the Hilbert projection theorem, any PT $\mbf{P}$ can be uniquely decomposed into two orthogonal components: the closest independent PT $\mbf{P}_{\ind} \in \mcal{S}^{IJ}_{\ind}$ and the interaction PT $\mbf{P}_{\inte} \in \mcal{S}^{IJ}_{\inte}$.
Here, $\mcal{S}^{IJ}_{\inte}$ denotes the orthogonal complement of $\mcal{S}^{IJ}_{\ind}$.
This decomposition is expressed as
\[
  \mbf{P} = \mbf{P}_{\ind}\oplus\mbf{P}_{\inte},\ \langle\mbf{P}_{\ind}, \mbf{P}_{\inte}\rangle_{A} = 0.
\]
Cells of $\mbf{P}_{\inte}$ can be computed from the original PT $\mbf{P}$ by
\[
  p_{ij}^{\inte}\propto\left(\prod_{k=1}^{I} \prod_{l=1}^{J} \frac{p_{ij}}{p_{kj}p_{il}}\right)^{1/IJ}.
\]
The dimension of $S^{IJ}_{\inte}$ equals $IJ-1-(I+J-2) = (I-1)(J-1)$.
Based on this framework, \citet{egozcue_independence_2015} introduced a measure of dependence termed \textit{simplicial deviance}, defined as $\|\mbf{P}_{\inte}\|_{A}^{2}$, and proposed a method for testing independence using this measure.

\subsection{Decomposition into Symmetric and Skew-Symmetric Components}\label{subsec2.3}
%Similarly to the decomposition of PTs into independent and interaction components, \citet{nakamura_symmetry_2024} showed that the set of symmetric PTs forms a linear subspace, $\mcal{S}^{I^{2}}_{\sym}$, of the simplex $\mcal{S}^{I^{2}}$.
In the case of \( I \times I \) square contingency tables, where rows and columns have the same categories, \citet{nakamura_symmetry_2024} showed, similarly to the case of independence, that the set of symmetric PTs forms a linear subspace $\mcal{S}^{I^{2}}_{\sym}$ of the simplex $\mcal{S}^{I^{2}}$.
The dimension of $\mcal{S}^{I^{2}}_\sym$ equals $(I-1)(I+2)/2$.
They demonstrated the orthogonal decomposition of a square PT $\mbf{P}$ into the closest symmetric PT $\mbf{P}_{\sym}\in\mcal{S}^{I^{2}}_{\sym}$ and the skew-symmetric PT $\mbf{P}_{\skw}\in\mcal{S}^{I^{2}}_{\skw}$, which is an orthogonal complement of $\mcal{S}^{I^{2}}_{\sym}$.
The dimension of $\mcal{S}^{I^{2}}_{\skw}$ equals $I(I-1)/2$.
This decomposition can be expressed as
\[
  \mbf{P} = \mbf{P}_{\sym}\oplus\mbf{P}_{\skw},\ \langle\mbf{P}_{\sym}, \mbf{P}_{\skw}\rangle_{A} = 0.
\]
%This decomposition allows for a more nuanced analysis of symmetry in square CTs, since it separates the symmetric component from the skew-symmetric component.

They showed that the nearest symmetric PT is obtained by the geometric mean of symmetric cells,
\[
  \mbf{P}_{\sym} = 0.5\odot\big(\mbf{P}\oplus T(\mbf{P})\big),
\]
with related cells
\[
  p_{ij}^{\sym}\propto\left(p_{ij}p_{ji}\right)^{1/2},
\]
where $T(\cdot)$ is a vector transposition corresponding to a matrix transpose.
%It is noteworthy that this result coincides with the BAN estimator proposed by \citet{ireland_symmetry_1969}.
It is noteworthy that this result coincides with the BAN estimator proposed by \citet{ireland_symmetry_1969}.
Their estimator is the minimum point of reverse Kullback-Leibler divergence from population probabilities to $\mcal{S}^{I^{2}}_{\sym}$.
This procedure corresponds to an e-projection, as it projects the population probability onto the $\mcal{S}^{I^{2}}_{\sym}$ along the e-geodesic.
Consequently, this result coincides with the orthogonal projection that minimizes the Aitchison distance, as the clr-transformed space serves as the tangent space of the e-geodesic.

Cells of $\mbf{P}_{\skw}$ can be computed from the original $\mbf{P}$ by
\[
  p_{ij}^{\skw}\propto\left(\frac{p_{ij}}{p_{ji}}\right)^{1/2}.
\]
%To quantify the degree of asymmetry in a PT, they proposed a measure called \textit{simplicial skewness}, defined as
To quantify the degree of skewness in a PT, they proposed a measure called \textit{simplicial skewness} defined as

\[
  E^{2}(\mbf{P}) = \|\mbf{P}_{\skw}\|_{A}^{2}.
\]
%This measure provides a scalar value representing the overall asymmetry in the table, analogous to the simplicial deviance from independence.
This measure provides a scalar value representing the overall skewness in the table, analogous to the simplicial deviance from independence.
%Based on this measure, they developed a method to test symmetry in square CTs.
%The power of this geometric approach is illustrated in Tables \ref{tab:stuart_symmetry} and \ref{tab:stuart_skew-symmetry}, which show the closest symmetric PT and the corresponding skew-symmetric PT for the unaided distance vision data.
%The power of this geometric approach is illustrated in Tables \ref{tab:stuart_symmetry} and \ref{tab:stuart_skew-symmetry}, which show the closest symmetric PT and the corresponding clr-transformed skew-symmetric PT for the unaided distance vision data.
%This decomposition provides insights into the symmetric structure of the data and the departures from symmetry, which might not be apparent from the original table alone.
To assess each cell's contribution to skewness, they proposed an array, termed the \textit{skewness array}, with $(i,j)$-th element defined as $\mathrm{sgn}\big(\clr_{ij}(\mbf{P}_{\skw})\big)\big(\clr_{ij}(\mbf{P}_{\skw})\big)^2/E^2(\mbf{P})$.
In this skewness array, each entry represents the signed contribution of the corresponding cell to the simplicial skewness.

This geometric perspective on symmetry complements the independence-interaction decomposition discussed earlier, providing a comprehensive framework for analyzing the structure of square CTs.
In the following sections, we will explore how these concepts can be combined and extended to study more complex structures such as quasi-symmetry and geometric marginal homogeneity.

\section{Quasi-symmetry and geometric marginal homogeneity}\label{sec3}
\citet{egozcue_interpretable_2021} first introduced the geometric approach to studying symmetry and skew-symmetry in square tables, in the context of tables with positive components using the log-geometric approach, which is a non-closed Aitchison geometry.
In this study, we consider PTs under the Aitchison geometry on the simplex following a similar approach.
Specifically, we consider four subspaces of the simplex $\mcal{S}^{I^{2}}$: $\mcal{S}^{I^{2}}_{\ind}$, $\mcal{S}^{I^{2}}_{\inte}$, $\mcal{S}^{I^{2}}_{\sym}$, and $\mcal{S}^{I^{2}}_{\skw}$, and define four new subspaces as their intersections.

\begin{definition}[four subspaces of $\mcal{S}^{I^{2}}$]
  \label{def1}
  Let $\mcal{S}^{I^{2}}_{\ind}$, $\mcal{S}^{I^{2}}_{\inte}$, $\mcal{S}^{I^{2}}_{\sym}$, and $\mcal{S}^{I^{2}}_{\skw}$ be subspaces of $\mcal{S}^{I^{2}}$, where $\mcal{S}^{I^{2}}_{\ind}$ is the set of all independence tables, $\mcal{S}^{I^{2}}_{\inte}$ is the set of all interaction tables, $\mcal{S}^{I^{2}}_{\sym}$ is the set of all symmetric tables, and $\mcal{S}^{I^{2}}_{\skw}$ is the set of all skew-symmetric tables in $\mcal{S}^{I^{2}}$. We define the following subspaces as their intersections,
  \begin{align*}
    &\mcal{S}^{I^{2}}_{\syind} = \mcal{S}^{I^{2}}_{\ind}\cap\mcal{S}^{I^{2}}_{\sym},\
    \mcal{S}^{I^{2}}_{\skind} = \mcal{S}^{I^{2}}_{\ind}\cap\mcal{S}^{I^{2}}_{\skw},\\
    &\mcal{S}^{I^{2}}_{\syint} = \mcal{S}^{I^{2}}_{\inte}\cap\mcal{S}^{I^{2}}_{\sym},\
    \ \mcal{S}^{I^{2}}_{\skint} = \mcal{S}^{I^{2}}_{\inte}\cap\mcal{S}^{I^{2}}_{\skw}.
  \end{align*}
\end{definition}

The four subspaces form the basis for the orthogonal decomposition of any square PT in $\mcal{S}^{I^{2}}$.
This decomposition allows us to express any PT as a unique combination of its symmetric independence, skew-symmetric independence, symmetric interaction, and skew-symmetric interaction components.
The following theorem formalizes this orthogonal decomposition.

\begin{theorem}[Orthogonal decomposition]
  \label{thm1}
  Let $\mbf{P}$ be an $I\times I\in\mcal{S}^{I^{2}}$ table. $\mbf{P}$ can be expressed in a unique way as
  \begin{align*}
    \mbf{P} = \mbf{P}_{\syind}\oplus\mbf{P}_{\skind}\oplus\mbf{P}_{\syint}\oplus\mbf{P}_{\skint},
  \end{align*}
  where $\mbf{P}_{\syind}\in\mcal{S}^{I^{2}}_{\syind}$, $\mbf{P}_{\skind}\in\mcal{S}^{I^{2}}_{\skind}$, $\mbf{P}_{\syint}\in\mcal{S}^{I^{2}}_{\syint}$, and $\mbf{P}_{\skint}\in\mcal{S}^{I^{2}}_{\skint}$.
\end{theorem}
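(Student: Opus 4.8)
The plan is to move the entire problem into the clr-plane $U\subset\mbb{R}^{I^{2}}$, where $\clr$ is a linear isometry carrying $\oplus$ to vector addition, $\odot$ to scalar multiplication, and $\langle\cdot,\cdot\rangle_{A}$ to the Euclidean inner product. Writing $U_{\sym}=\clr(\mcal{S}^{I^{2}}_{\sym})$, $U_{\ind}=\clr(\mcal{S}^{I^{2}}_{\ind})$, $U_{\skw}=\clr(\mcal{S}^{I^{2}}_{\skw})$, $U_{\inte}=\clr(\mcal{S}^{I^{2}}_{\inte})$ and $U_{\syind}=U_{\sym}\cap U_{\ind}$, and so on, it suffices to prove that
\[
  U = U_{\syind}\oplus U_{\skind}\oplus U_{\syint}\oplus U_{\skint}
\]
is an \emph{orthogonal} direct sum. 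Transporting this back through $\clr^{-1}$ then yields existence --- take $\mbf{P}_{\syind}=\clr^{-1}\!\big(\Pi_{\syind}\clr(\mbf{P})\big)$ and likewise for the other three components, $\Pi_{\bullet}$ denoting the orthogonal projection onto $U_{\bullet}$ --- and uniqueness, since the representation of a vector in a direct sum is unique.

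By Subsections \ref{subsec2.2}--\ref{subsec2.3} we already have two orthogonal direct sums $U=U_{\ind}\oplus U_{\inte}$ and $U=U_{\sym}\oplus U_{\skw}$, with associated orthogonal projections $\Pi_{\ind}$, $\Pi_{\inte}=\mathrm{Id}-\Pi_{\ind}$ and $\Pi_{\sym}$, $\Pi_{\skw}=\mathrm{Id}-\Pi_{\sym}$; here $\Pi_{\sym}x=\tfrac12\big(x+T(x)\big)$ is symmetrization and, on $U$, $\Pi_{\ind}$ is the ANOVA-type ``two main effects'' map sending $x$ to the table with $(i,j)$ entry $\bar x_{i\cdot}+\bar x_{\cdot j}$ (the $i$-th row mean plus the $j$-th column mean of $x$). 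The crucial step is to show that these two projections commute,
\[
  \Pi_{\sym}\Pi_{\ind}=\Pi_{\ind}\Pi_{\sym}.
\]
I would deduce this from the fact that $\Pi_{\sym}$ leaves both $U_{\ind}$ and $U_{\inte}$ invariant: symmetrizing a main-effects table $(a_{i}+b_{j})_{ij}$ gives $\big(\tfrac12(a_{i}+b_{i})+\tfrac12(a_{j}+b_{j})\big)_{ij}$, again a main-effects table; and if $x$ has all row sums and all column sums equal to zero, so does $\tfrac12(x+T(x))$, because each of its row sums is half a row sum of $x$ plus half a column sum of $x$ --- this is exactly where squareness $I=J$ is used, so that the index ranges match. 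Since $\Pi_{\ind}$ is self-adjoint and both its image $U_{\ind}$ and its kernel $U_{\inte}$ are $\Pi_{\sym}$-invariant, one gets $\Pi_{\sym}\Pi_{\ind}=\Pi_{\ind}\Pi_{\sym}\Pi_{\ind}=\Pi_{\ind}\Pi_{\sym}$. (Alternatively, the identity follows from a short direct computation of both sides on an arbitrary $x$.)

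Once commutativity is in hand, the rest is the standard calculus of commuting orthogonal projections. The four products $\Pi_{\syind}:=\Pi_{\sym}\Pi_{\ind}$, $\Pi_{\skind}:=\Pi_{\skw}\Pi_{\ind}$, $\Pi_{\syint}:=\Pi_{\sym}\Pi_{\inte}$, $\Pi_{\skint}:=\Pi_{\skw}\Pi_{\inte}$ are again orthogonal projections; any two distinct ones compose to $0$ (after commuting factors, either $\Pi_{\sym}\Pi_{\skw}=0$ or $\Pi_{\ind}\Pi_{\inte}=0$ appears), and they sum to the identity since
\[
  \Pi_{\syind}+\Pi_{\skind}+\Pi_{\syint}+\Pi_{\skint}=(\Pi_{\sym}+\Pi_{\skw})(\Pi_{\ind}+\Pi_{\inte})=\mathrm{Id}.
\]
Because $\Pi_{\sym}$ and $\Pi_{\ind}$ commute, a vector is fixed by $\Pi_{\sym}\Pi_{\ind}$ iff it is fixed by each of them, so $\mathrm{Im}(\Pi_{\syind})=U_{\sym}\cap U_{\ind}=U_{\syind}$, and similarly for the other three; orthogonality of the four images follows from $U_{\sym}\perp U_{\skw}$ and $U_{\ind}\perp U_{\inte}$. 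This gives the displayed orthogonal direct sum of $U$, and pulling back through $\clr^{-1}$ completes the argument; the dimension identity $(I-1)(I+2)/2+I(I-1)/2=I^{2}-1=2(I-1)+(I-1)^{2}$ is a consistency check that everything fits. \textbf{The main obstacle} is the commutativity $\Pi_{\sym}\Pi_{\ind}=\Pi_{\ind}\Pi_{\sym}$: everything after it is routine linear algebra, and the delicate point there is the interplay of the row-sum and column-sum conditions under transposition, which is precisely where the squareness hypothesis is indispensable.
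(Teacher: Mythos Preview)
Your proof is correct and follows essentially the same route as the paper: decompose first into independent and interaction parts, then split each of those into symmetric and skew-symmetric pieces. The paper simply invokes the two cited decompositions in sequence and concludes, whereas you recast the same idea in terms of commuting orthogonal projections on the clr-plane and actually \emph{verify} the step the paper takes for granted --- namely that symmetrization preserves both $\mcal{S}^{I^{2}}_{\ind}$ and $\mcal{S}^{I^{2}}_{\inte}$ (equivalently $\Pi_{\sym}\Pi_{\ind}=\Pi_{\ind}\Pi_{\sym}$), which is precisely what guarantees that the symmetric part of $\mbf{P}_{\ind}$ really lands in $\mcal{S}^{I^{2}}_{\syind}$ and not merely in $\mcal{S}^{I^{2}}_{\sym}$. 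Your version is therefore more self-contained; the underlying argument is the same.
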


This orthogonal decomposition provides a framework for analyzing the structure of square PTs in terms of independence, interaction, symmetry, and skew-symmetry.
In the following sections, we will explore how this decomposition can be utilized to study quasi-symmetry and geometric marginal homogeneity, two important properties of square PTs.

\subsection{Quasi-symmetric PT}\label{sec3.1}
Quasi-symmetry (QS) is a less restrictive statistical property of square PTs compared to complete symmetry. 
While symmetry requires that cell probabilities be symmetric, QS allows for differences in marginals and only requires the symmetry of local odds ratios.
%This flexibility makes QS a more practical model for real-world applications, as it can capture important association structures in square CTs that might be obscured under the more stringent symmetry assumption.
QS enables researchers to better understand and interpret complex relationships within categorical data, even in cases where the data do not fully adhere to symmetry requirements.
Based on prior research \citep{egozcue_compositional_2008, egozcue_independence_2015}, we can intuitively define quasi-symmetric PT as follows.
\begin{definition}[Quasi-symmetric PT]
  \label{def:def2}
  Let $\mbf{P}\in\mcal{S}^{I^{2}}$ be an $I\times I$ table with decomposition $\mbf{P}=\mbf{P}_{ind}\oplus\mbf{P}_{\inte}$.
  $\mbf{P}$ is a quasi-symmetric PT if
  \begin{align*}
    \mbf{P}_{\inte} = T(\mbf{P}_{\inte}).
  \end{align*}
\end{definition}
%This is equivalent to the definition of Quasi-symmetry proposed by \citet{caussinus_contribution_1965}.
We can prove that the above definition is equivalent to that of QS proposed by \citet{caussinus_contribution_1965}.
\begin{theorem}
  \label{thm2}
  Quasi-symmetry given in Definition \ref{def:def2} is equivalent to the quasi-symmetric structure defined by \eqref{eq1}.
\end{theorem}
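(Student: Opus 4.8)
The plan is to route both notions of quasi-symmetry through a single intermediate condition. Write $a_{ij}=\log p_{ij}$ and let $c_{ij}=a_{ij}-a_{ji}=\log(p_{ij}/p_{ji})$ denote the antisymmetric array of log-odds of transposed cells. The intermediate condition is
\[
  (\star)\qquad \text{there exists } u\in\mbb{R}^{I} \text{ with } c_{ij}=u_{i}-u_{j}\ \text{ for all } i,j .
\]
I would prove Definition~\ref{def:def2} $\Longleftrightarrow$ $(\star)$ $\Longleftrightarrow$ \eqref{eq1}. The first step is a linear computation in the clr-plane; the second relates $(\star)$ to the local odds-ratio condition.

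For Definition~\ref{def:def2} $\Longleftrightarrow$ $(\star)$, I would first record that the clr image of the interaction part is the double-centring
\[
  \clr_{ij}(\mbf{P}_{\inte}) = a_{ij}-\bar{a}_{i\cdot}-\bar{a}_{\cdot j}+\bar{a}_{\cdot\cdot},
\]
where $\bar{a}_{i\cdot}$, $\bar{a}_{\cdot j}$, $\bar{a}_{\cdot\cdot}$ are the $i$th row mean, $j$th column mean, and grand mean of $(a_{ij})$; this follows by combining the clr formulas for $row(\mbf{P})$ and $col(\mbf{P})$ and using $\sum_{i,j}\clr_{ij}(\mbf{P})=0$. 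Since clr is injective, $\mbf{P}_{\inte}=T(\mbf{P}_{\inte})$ in $\mcal{S}^{I^{2}}$ is equivalent to the array $\clr(\mbf{P}_{\inte})$ being symmetric. Subtracting the transpose,
\[
  \clr_{ij}(\mbf{P}_{\inte})-\clr_{ji}(\mbf{P}_{\inte}) = c_{ij}-\big((\bar{a}_{i\cdot}-\bar{a}_{\cdot i})-(\bar{a}_{j\cdot}-\bar{a}_{\cdot j})\big),
\]
and $\bar{a}_{i\cdot}-\bar{a}_{\cdot i}=\frac{1}{I}\sum_{l}c_{il}$. Hence Definition~\ref{def:def2} is equivalent to $c_{ij}=\frac{1}{I}\sum_{l}c_{il}-\frac{1}{I}\sum_{l}c_{jl}$ for all $i,j$, which is of the form $u_{i}-u_{j}$; conversely, if $c_{ij}=u_{i}-u_{j}$ then averaging over $l$ gives $\frac{1}{I}\sum_{l}c_{il}=u_{i}-\bar{u}$ and the identity holds. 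This settles Definition~\ref{def:def2} $\Longleftrightarrow$ $(\star)$.

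For $(\star)$ $\Longleftrightarrow$ \eqref{eq1}, a direct expansion gives
\[
  \log\theta_{ij}-\log\theta_{ji} = c_{ij}+c_{i+1,j+1}-c_{i,j+1}-c_{i+1,j},
\]
the mixed second difference of $c$ over the $2\times2$ block at $(i,j)$; the non-transposed $\log p$ terms pair up so that only differences of transposed cells survive. Thus \eqref{eq1} says every such mixed second difference vanishes, which by telescoping along rows and columns is equivalent to $c_{ij}=g(i)+h(j)$ for some $g,h:\{1,\dots,I\}\to\mbb{R}$; imposing the antisymmetry $c_{ij}=-c_{ji}$ forces $g(i)+h(i)$ to be constant, hence (after absorbing that constant) $h=-g$ and $c_{ij}=u_{i}-u_{j}$ with $u_{i}=g(i)$. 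The converse is immediate, so $(\star)$ $\Longleftrightarrow$ \eqref{eq1}, completing the argument.

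The main obstacle is the odds-ratio step: obtaining the clean identity $\log\theta_{ij}-\log\theta_{ji}=c_{ij}+c_{i+1,j+1}-c_{i,j+1}-c_{i+1,j}$ requires carefully matching the four cells of $\theta_{ji}$ with the transposes of the four cells of $\theta_{ij}$, and one must then justify the passage from ``all mixed second differences vanish'' to additive separability together with the antisymmetry constraint. A shorter alternative would invoke Caussinus's known equivalence between \eqref{eq1} and the log-linear factorisation $\log p_{ij}=\mu+\lambda_{i}+\rho_{j}+\tau_{ij}$ with $\tau_{ij}=\tau_{ji}$, and then show directly that this factorisation is equivalent to $\clr(\mbf{P}_{\inte})$ being symmetric, since the $\mu,\lambda_{i},\rho_{j}$ terms are annihilated by the double-centring, leaving $\clr_{ij}(\mbf{P}_{\inte})$ equal to the double-centred $\tau$, which is symmetric precisely when $\tau$ is.
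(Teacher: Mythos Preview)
Your argument is correct. The route, however, differs from the paper's. The paper proves the forward implication by writing $p_{ij}\propto g(row_i[\mbf{P}])\,g(col_j[\mbf{P}])\,c_{ij}$ and observing that the geometric marginals cancel in $\theta_{ij}$, leaving $\theta_{ij}=c_{ij}c_{i+1,j+1}/(c_{i+1,j}c_{i,j+1})$, which is symmetric when $c_{ij}=c_{ji}$. For the reverse implication the paper invokes Caussinus's known log-linear representation $p_{ij}=\alpha_i\beta_j\psi_{ij}$ with $\psi_{ij}=\psi_{ji}$ and then computes $p_{ij}^{\inte}$ directly to see it is symmetric---precisely the ``shorter alternative'' you sketch at the end. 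Your main argument instead factors both notions through the intermediate condition $(\star)$ on the antisymmetric log-array $c_{ij}=\log(p_{ij}/p_{ji})$, using the double-centring formula for $\clr(\mbf{P}_{\inte})$ on one side and a mixed-second-difference / telescoping characterisation of additive separability on the other. This is fully self-contained (no appeal to Caussinus's representation as a black box) and makes transparent that both definitions amount to the single linear constraint $c_{ij}=u_i-u_j$; the price is the extra bookkeeping in the telescoping step and the antisymmetry reduction $g+h\equiv\text{const}$, whereas the paper's proof is shorter but leans on an external equivalence.
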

This equivalence emphasizes that both the traditional definition of QS and our Aitchison geometry-based definition represent e-flat structures in their respective geometric frameworks, highlighting the natural correspondence between these characterizations.
The following theorem shows that the set of all quasi-symmetric PTs forms a linear subspace of the simplex $\mcal{S}^{I^{2}}$ and specifies its dimensionality.

\begin{theorem}[Subspace of quasi-symmetric PT]
  \label{thm3}
  Let $\mcal{S}_{\qs}^{I^{2}}$ be the set of quasi-symmetric PTs.
  $\mcal{S}_{\qs}^{I^{2}}$ is a linear subspace of $\mcal{S}^{I^{2}}$.
  Its dimension is $(I-1)(I+4)/2$.
\end{theorem}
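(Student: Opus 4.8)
The plan is to identify $\mcal{S}_{\qs}^{I^{2}}$ with a perturbation sum of subspaces already understood in Section~\ref{sec2} and Theorem~\ref{thm1}, and then read off its dimension from those of the pieces. By Definition~\ref{def:def2}, a table $\mbf{P}$ is quasi-symmetric precisely when its interaction component is symmetric, that is, $\mbf{P}_{\inte}\in\mcal{S}^{I^{2}}_{\inte}\cap\mcal{S}^{I^{2}}_{\sym}=\mcal{S}^{I^{2}}_{\syint}$. I would first prove the identity
\[
  \mcal{S}_{\qs}^{I^{2}}=\Set*{\mbf{A}\oplus\mbf{B}\given \mbf{A}\in\mcal{S}^{I^{2}}_{\ind},\ \mbf{B}\in\mcal{S}^{I^{2}}_{\syint}}.
\]
The inclusion from left to right is the observation just made, applied to the decomposition $\mbf{P}=\mbf{P}_{\ind}\oplus\mbf{P}_{\inte}$. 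For the reverse inclusion, note that $\mcal{S}^{I^{2}}_{\syint}\subseteq\mcal{S}^{I^{2}}_{\inte}=(\mcal{S}^{I^{2}}_{\ind})^{\perp}$, so for any $\mbf{A}\in\mcal{S}^{I^{2}}_{\ind}$ and $\mbf{B}\in\mcal{S}^{I^{2}}_{\syint}$ the pair $(\mbf{A},\mbf{B})$ is orthogonal; by the uniqueness of the orthogonal independence--interaction splitting (Hilbert projection theorem), $(\mbf{A},\mbf{B})$ is then exactly the independence--interaction decomposition of $\mbf{A}\oplus\mbf{B}$, so its interaction part $\mbf{B}$ is symmetric and $\mbf{A}\oplus\mbf{B}$ is quasi-symmetric. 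Since $\mcal{S}^{I^{2}}_{\ind}$ and $\mcal{S}^{I^{2}}_{\syint}$ are mutually orthogonal subspaces, this perturbation sum is an orthogonal direct sum, and in particular $\mcal{S}_{\qs}^{I^{2}}$ is a linear subspace of $\mcal{S}^{I^{2}}$. Equivalently, in the language of Theorem~\ref{thm1}, $\mbf{P}$ is quasi-symmetric iff its skew-interaction component $\mbf{P}_{\skint}$ is the neutral element, so $\mcal{S}_{\qs}^{I^{2}}=\mcal{S}^{I^{2}}_{\syind}\oplus\mcal{S}^{I^{2}}_{\skind}\oplus\mcal{S}^{I^{2}}_{\syint}$.

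Because the two summands are orthogonal, $\dim\mcal{S}_{\qs}^{I^{2}}=\dim\mcal{S}^{I^{2}}_{\ind}+\dim\mcal{S}^{I^{2}}_{\syint}=(2I-2)+\dim\mcal{S}^{I^{2}}_{\syint}$, so it remains to compute $\dim\mcal{S}^{I^{2}}_{\syint}$, which I would do in the clr-plane. There $\clr(\mcal{S}^{I^{2}}_{\inte})$ is the space of $I\times I$ real arrays all of whose row sums and column sums vanish, being the orthogonal complement in the clr-plane of $\clr(\mcal{S}^{I^{2}}_{\ind})$, which is spanned by the arrays $(a_{i}+b_{j})$; and $\clr(\mcal{S}^{I^{2}}_{\sym})$ is the space of symmetric arrays with zero total. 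Hence $\clr(\mcal{S}^{I^{2}}_{\syint})$ consists of the symmetric $I\times I$ arrays whose row sums all equal $0$ (the column-sum and zero-total conditions then being automatic). Parametrising such an array by its strictly upper-triangular entries $x_{ij}$, $i<j$, and setting $x_{ii}=-\sum_{j\neq i}x_{ij}$, exhibits this space as free of dimension $I(I-1)/2$. Therefore $\dim\mcal{S}_{\qs}^{I^{2}}=(2I-2)+I(I-1)/2=(I-1)(I+4)/2$, which is the asserted value.

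I do not anticipate a real obstacle: once the decompositions of Section~\ref{sec2} and Theorem~\ref{thm1} are in hand, the argument is elementary linear algebra. The step needing most care is the identity in the first paragraph---verifying that the perturbation sum $\mcal{S}^{I^{2}}_{\ind}\oplus\mcal{S}^{I^{2}}_{\syint}$ coincides with $\mcal{S}_{\qs}^{I^{2}}$ and is a linear subspace---which rests entirely on the uniqueness of the orthogonal independence--interaction decomposition; and the count $\dim\mcal{S}^{I^{2}}_{\syint}=I(I-1)/2$, where one must check that imposing vanishing row sums on symmetric arrays amounts to independent linear constraints, something the upper-triangular parametrisation makes transparent. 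If one prefers to bypass the clr computation, $\dim\mcal{S}^{I^{2}}_{\syint}$ can instead be obtained from the known values $\dim\mcal{S}^{I^{2}}_{\ind}=2I-2$, $\dim\mcal{S}^{I^{2}}_{\inte}=(I-1)^{2}$, $\dim\mcal{S}^{I^{2}}_{\sym}=(I-1)(I+2)/2$ and $\dim\mcal{S}^{I^{2}}_{\skw}=I(I-1)/2$, together with the orthogonal splittings $\mcal{S}^{I^{2}}_{\sym}=\mcal{S}^{I^{2}}_{\syind}\oplus\mcal{S}^{I^{2}}_{\syint}$ and $\mcal{S}^{I^{2}}_{\ind}=\mcal{S}^{I^{2}}_{\syind}\oplus\mcal{S}^{I^{2}}_{\skind}$ that underlie Theorem~\ref{thm1}.
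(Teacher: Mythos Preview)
Your proposal is correct and follows essentially the same approach as the paper: identify $\mcal{S}_{\qs}^{I^{2}}$ as the (orthogonal) direct sum $\mcal{S}^{I^{2}}_{\ind}\oplus\mcal{S}^{I^{2}}_{\syint}$, note that both summands are linear subspaces closed under perturbation and powering, and add their dimensions $(2I-2)+I(I-1)/2=(I-1)(I+4)/2$. Your write-up is in fact more careful than the paper's, since you explicitly justify both inclusions of the set identity and supply a direct clr-plane computation of $\dim\mcal{S}^{I^{2}}_{\syint}$, which the paper simply asserts.
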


The fact that the set of quasi-symmetric PTs forms a linear subspace allows us to define the orthogonal projection of any PT onto this subspace.
The following theorem provides an explicit formula for this projection, which can be interpreted as the closest quasi-symmetric PT to a given PT in terms of the Aitchison distance.

\begin{theorem}[orthogonal projection]
  \label{thm4}
  Let $\mbf{P}\in\mcal{S}^{I^{2}}$ be an $I\times I$ table.
  The closest quasi-symmetric PT to $\mbf{P}$ is $\mbf{P}_{\qs} = \mbf{P}_{\ind}\oplus\Big(0.5 \odot \big(\mbf{P}_{\inte} \oplus T(\mbf{P}_{\inte})\big)\Big)$ in the sense of the Aitchison distance in $\mcal{S}^{I^{2}}$.
\end{theorem}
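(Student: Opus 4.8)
The plan is to reduce the claim to the bookkeeping of the two orthogonal decompositions already at hand. Since $(\mcal{S}^{I^{2}},\oplus,\odot,\langle\cdot,\cdot\rangle_{A})$ is a finite-dimensional Euclidean space and, by Theorem \ref{thm3}, $\mcal{S}^{I^{2}}_{\qs}$ is a (closed) linear subspace, the Hilbert projection theorem tells us that the point of $\mcal{S}^{I^{2}}_{\qs}$ nearest to $\mbf{P}$ in the Aitchison distance is unique and coincides with the orthogonal projection of $\mbf{P}$ onto $\mcal{S}^{I^{2}}_{\qs}$; so it suffices to compute that projection. First I would record what Definition \ref{def:def2} and Theorem \ref{thm1} supply: a PT is quasi-symmetric precisely when its interaction part is symmetric, hence $\mcal{S}^{I^{2}}_{\qs}=\mcal{S}^{I^{2}}_{\ind}\oplus\mcal{S}^{I^{2}}_{\syint}$; and Theorem \ref{thm1} gives the four mutually orthogonal summands $\mcal{S}^{I^{2}}_{\syind},\mcal{S}^{I^{2}}_{\skind},\mcal{S}^{I^{2}}_{\syint},\mcal{S}^{I^{2}}_{\skint}$ with $\mcal{S}^{I^{2}}_{\ind}=\mcal{S}^{I^{2}}_{\syind}\oplus\mcal{S}^{I^{2}}_{\skind}$ and $\mcal{S}^{I^{2}}_{\inte}=\mcal{S}^{I^{2}}_{\syint}\oplus\mcal{S}^{I^{2}}_{\skint}$.

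Because these four subspaces are mutually orthogonal and together span $\mcal{S}^{I^{2}}$, orthogonal projection onto any union of them is simply the perturbation of the corresponding coordinate projections from Theorem \ref{thm1}. In particular the orthogonal projection of $\mbf{P}$ onto $\mcal{S}^{I^{2}}_{\qs}=\mcal{S}^{I^{2}}_{\syind}\oplus\mcal{S}^{I^{2}}_{\skind}\oplus\mcal{S}^{I^{2}}_{\syint}$ is $\mbf{P}_{\syind}\oplus\mbf{P}_{\skind}\oplus\mbf{P}_{\syint}=\mbf{P}_{\ind}\oplus\mbf{P}_{\syint}$, so it only remains to identify $\mbf{P}_{\syint}$ with $0.5\odot\big(\mbf{P}_{\inte}\oplus T(\mbf{P}_{\inte})\big)$. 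For this, I would note that $\mbf{P}_{\syint}$ is at once the $\syint$-component of $\mbf{P}$ and the orthogonal projection of the interaction PT $\mbf{P}_{\inte}$ onto $\mcal{S}^{I^{2}}_{\sym}$ (since $\mbf{P}_{\inte}\in\mcal{S}^{I^{2}}_{\inte}$ has no component in $\mcal{S}^{I^{2}}_{\syind}$); applying the symmetric-projection formula of Subsection \ref{subsec2.3} to the PT $\mbf{P}_{\inte}$ then gives exactly $0.5\odot\big(\mbf{P}_{\inte}\oplus T(\mbf{P}_{\inte})\big)$. Equivalently, one may compute directly, using that $T$ fixes $\mcal{S}^{I^{2}}_{\syint}$ pointwise and restricts to $(-1)\odot$ on $\mcal{S}^{I^{2}}_{\skint}$, that $\mbf{P}_{\inte}\oplus T(\mbf{P}_{\inte})=2\odot\mbf{P}_{\syint}$. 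Hence $\mbf{P}_{\qs}=\mbf{P}_{\ind}\oplus\big(0.5\odot(\mbf{P}_{\inte}\oplus T(\mbf{P}_{\inte}))\big)$; as a sanity check, this table is visibly quasi-symmetric and its residual $\mbf{P}\oplus\big((-1)\odot\mbf{P}_{\qs}\big)=\mbf{P}_{\skint}$ is orthogonal to $\mcal{S}^{I^{2}}_{\qs}=\mcal{S}^{I^{2}}_{\ind}\oplus\mcal{S}^{I^{2}}_{\syint}$.

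The individual steps are short, and the one place I expect friction is the compatibility of the two decompositions — namely that $\mcal{S}^{I^{2}}_{\inte}$ itself splits orthogonally as $\mcal{S}^{I^{2}}_{\syint}\oplus\mcal{S}^{I^{2}}_{\skint}$ (equivalently, that the transposition $T$ preserves the independence/interaction splitting, so that $0.5\odot(\mbf{P}_{\inte}\oplus T(\mbf{P}_{\inte}))$ remains in $\mcal{S}^{I^{2}}_{\inte}$). This is exactly what allows the symmetric-projection formula of Subsection \ref{subsec2.3} to be reused legitimately on $\mbf{P}_{\inte}$, and it is available from the proof of Theorem \ref{thm1}; once it is in place, the remainder is componentwise arithmetic in the clr-plane.
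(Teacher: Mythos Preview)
Your proposal is correct and follows essentially the same idea as the paper's proof: both identify the residual $\mbf{P}\ominus\mbf{P}_{\qs}$ with $\mbf{P}_{\skint}$ and use its orthogonality to $\mcal{S}^{I^{2}}_{\qs}=\mcal{S}^{I^{2}}_{\ind}\oplus\mcal{S}^{I^{2}}_{\syint}$. The only difference is packaging: you invoke Theorem~\ref{thm1} to read off the projection abstractly as $\mbf{P}_{\ind}\oplus\mbf{P}_{\syint}$ and then identify $\mbf{P}_{\syint}$ via the symmetric-projection formula, whereas the paper verifies the variational condition $\langle\mbf{P}\ominus\mbf{P}_{\qs},\mbf{P}_{\qs}\ominus\mbf{Q}\rangle_{A}=0$ directly by clr-coordinate computation.
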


The orthogonal projection onto the $\mcal{S}_{\qs}^{I^{2}}$ decomposes any PT into two components: the quasi-symmetric component $\mbf{P}_{\qs}$ and the skew-symmetric interaction component $\mbf{P}_{\skint}=\mbf{P}\ominus\mbf{P}_{\qs}$, which represents the deviation from QS.
Cells of $\mbf{P}_{\qs}$ can be computed from the original $\mbf{P}$ by
\[
  p_{ij}^{\qs}\propto\left(p_{ij}p_{ji}\frac{g(row_{i}[\mbf{P}])g(col_{j}[\mbf{P}])}{g(row_{j}[\mbf{P}])g(col_{i}[\mbf{P}])}\right)^{1/2},
\]
and cells of $\mbf{P}_{\skint}$ can be computed from the original $\mbf{P}$ by
\[
 p_{ij}^{\skint}\propto\left(\frac{p_{ij}}{p_{ji}}\frac{g(row_{j}[\mbf{P}])g(col_{i}[\mbf{P}])}{g(row_{i}[\mbf{P}])g(col_{j}[\mbf{P}])}\right)^{1/2}.
\]

It is important to note that the geometric marginals of this quasi-symmetric PT remain unchanged from those of the original PT.
This preservation of geometric marginals is a key characteristic of the QS.
By maintaining the original marginal structure, QS isolates and focuses on the symmetry of the interaction effects alone.
This feature highlights QS's ability to examine the symmetric nature of associations between variables independently of their marginals, making it particularly useful in analyzing CTs where we want to study interaction effects separate from the marginal structures.
The magnitude of this deviation can be quantified using the simplicial quasi-skewness measure, defined as follows.

\begin{definition}[Simplicial quasi-skewness]
    \label{def3}
    Let $\mbf{P}\in\mcal{S}^{I^{2}}$ be an $I\times I$ table with the orthogonal decomposition $\mbf{P}=\mbf{P}_{\qs}\oplus\mbf{P}_{\skint}$.
    The simplicial quasi-skewness, which measures the degree of departure from quasi-symmetry, is defined as $Q^{2}(\mbf{P})=\|\mbf{P}_{\skint}\|_{A}^{2}$, with $0\leq Q^{2}(\mbf{P})<+\infty$.
\end{definition}

\subsection{Geometric Marginal Homogeneous PT}\label{sec3.2}
Geometric marginal homogeneity (GMH) is a novel concept in the analysis of square CTs, introduced within the framework of Aitchison geometry on the simplex.
While traditional marginal homogeneity focuses on the equality of arithmetic marginals, GMH considers the equality of geometric marginals.
This property implies a balance between the corresponding groups of rows and columns in terms of their geometric means \citep{egozcue_groups_2005}.
%GMH is less restrictive than complete symmetry, as it allows for asymmetries in the interaction structure while maintaining homogeneity in the geometric marginals.
GMH is less restrictive than complete symmetry, as it allows for skew-symmetries in the interaction structure while maintaining homogeneity in the geometric marginals.
%This flexibility makes GMH a useful tool for analyzing square CTs from a compositional perspective, complementing traditional analyses based on arithmetic marginals.
By focusing on the geometric structure of the marginals, GMH provides insights into the relative relationships between categories, which are particularly relevant in the context of compositional data analysis.

\begin{definition}[Geometric marginal homogeneity]
  \label{def4}
  Let $\mbf{P}\in\mcal{S}^{I^{2}}$ be an $I\times I$ table with orthogonal decomposition $\mbf{P}=\mbf{P}_{\ind}\oplus\mbf{P}_{\inte}$.
  $\mbf{P}$ is a geometric marginal homogeneity PT if
  \begin{align*}
    \mbf{P}_{\ind} = T(\mbf{P}_{\ind}).
  \end{align*}
\end{definition}

This definition implies that for a PT with GMH, the independent component $\mbf{P}_{\ind}$ is symmetric.
In other words, row and column geometric marginals of $\mbf{P}$ are equal.
%This provides a natural characterization of marginal homogeneity within the framework of Aitchison geometry, where geometric marginals play a central role in the study of independence in compositional data.
While traditional marginal homogeneity corresponds to an m-flat structure in information geometry, GMH represents its natural counterpart as an e-flat structure, providing a coherent framework that is consistent with the compositional approach to PTs.
%The concept of geometric marginal homogeneity is closely related to the classical notions of symmetry and quasi-symmetry in PTs.
%\citet{caussinus_contribution_1965} showed that a square PT satisfies symmetry if and only if it simultaneously satisfies quasi-symmetry and marginal homogeneity.
%In our framework within the Aitchison geometry of the simplex, we find a parallel result: the intersection of the quasi-symmetry subspace and the geometric marginal homogeneity subspace forms the symmetry subspace.
%This result provides a geometric interpretation of Caussinus finding in the context of Aitchison geometry.

We can characterize the GMH subspace as follows.
\begin{theorem}[Subspace of geometric marginal homogeneity PT]
\label{thm5}
  Let $\mcal{S}_{\gmh}^{I^{2}}$ be the set of geometric marginal homogeneity PTs.
  $\mcal{S}_{\gmh}^{I^{2}}$ is a linear subspace of $\mcal{S}^{I^{2}}$. Its dimension is $I(I-1)$.
\end{theorem}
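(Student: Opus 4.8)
The plan is to recognize $\mcal{S}^{I^{2}}_{\gmh}$ as the orthogonal complement of $\mcal{S}^{I^{2}}_{\skind}$ inside $\mcal{S}^{I^{2}}$, by combining the independence--interaction decomposition with the four-fold decomposition of Theorem \ref{thm1}, and then to count dimensions.

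\textbf{Step 1: relating the two decompositions.} For any $\mbf{P}\in\mcal{S}^{I^{2}}$, write the decomposition of Subsection \ref{subsec2.2} as $\mbf{P}=\mbf{P}_{\ind}\oplus\mbf{P}_{\inte}$ and the one of Theorem \ref{thm1} as $\mbf{P}=\mbf{P}_{\syind}\oplus\mbf{P}_{\skind}\oplus\mbf{P}_{\syint}\oplus\mbf{P}_{\skint}$. Since $\mbf{P}_{\syind}\oplus\mbf{P}_{\skind}\in\mcal{S}^{I^{2}}_{\ind}$ and $\mbf{P}_{\syint}\oplus\mbf{P}_{\skint}\in\mcal{S}^{I^{2}}_{\inte}$, uniqueness of the independence--interaction decomposition gives $\mbf{P}_{\ind}=\mbf{P}_{\syind}\oplus\mbf{P}_{\skind}$ (and likewise $\mbf{P}_{\inte}=\mbf{P}_{\syint}\oplus\mbf{P}_{\skint}$). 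A table in $\mcal{S}^{I^{2}}_{\skw}$ satisfies $T(\mbf{R})=\ominus\mbf{R}$ while a table in $\mcal{S}^{I^{2}}_{\sym}$ satisfies $T(\mbf{R})=\mbf{R}$, so $T(\mbf{P}_{\ind})=\mbf{P}_{\syind}\oplus(\ominus\mbf{P}_{\skind})$, and $\mbf{P}_{\ind}=T(\mbf{P}_{\ind})$ holds iff $2\odot\mbf{P}_{\skind}$ is the neutral element, i.e. iff $\mbf{P}_{\skind}$ is the neutral element. Hence $\mbf{P}$ is a GMH table precisely when $\mbf{P}\in\mcal{S}^{I^{2}}_{\syind}\oplus\mcal{S}^{I^{2}}_{\syint}\oplus\mcal{S}^{I^{2}}_{\skint}$; since the interaction subspace is transposition-invariant, $\mcal{S}^{I^{2}}_{\syint}\oplus\mcal{S}^{I^{2}}_{\skint}=\mcal{S}^{I^{2}}_{\inte}$, so $\mcal{S}^{I^{2}}_{\gmh}=\mcal{S}^{I^{2}}_{\syind}\oplus\mcal{S}^{I^{2}}_{\inte}$, the orthogonal complement of $\mcal{S}^{I^{2}}_{\skind}$. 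In particular $\mcal{S}^{I^{2}}_{\gmh}$ is a linear subspace and $\dim\mcal{S}^{I^{2}}_{\gmh}=(I^{2}-1)-\dim\mcal{S}^{I^{2}}_{\skind}$.

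\textbf{Step 2: dimension of $\mcal{S}^{I^{2}}_{\skind}$.} In clr coordinates, a table of $\mcal{S}^{I^{2}}_{\ind}$ has entries $u_i+v_j$ with $\sum_i u_i=\sum_j v_j=0$. Imposing skew-symmetry, $u_i+v_j=-(u_j+v_i)$ for all $i,j$; setting $j=i$ forces $v_i=-u_i$, after which the condition is automatic and the two zero-sum constraints collapse to the single constraint $\sum_i u_i=0$. Thus $\mcal{S}^{I^{2}}_{\skind}$ is parametrized by $(u_1,\dots,u_I)$ with $\sum_i u_i=0$, so $\dim\mcal{S}^{I^{2}}_{\skind}=I-1$, whence $\dim\mcal{S}^{I^{2}}_{\gmh}=(I^{2}-1)-(I-1)=I(I-1)$.

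\textbf{Main obstacle.} The only delicate point is the clr bookkeeping in Step 2: one must check that the substitution $v_i=-u_i$ genuinely merges the two zero-sum constraints into one, so that exactly $I-1$ parameters survive and none is double-counted; the rest follows immediately from Theorem \ref{thm1} and the uniqueness of the orthogonal decompositions already established. A self-contained variant avoids Theorem \ref{thm1}: impose the GMH condition $u_i+v_j=u_j+v_i$ together with $\sum_i u_i=\sum_j v_j=0$ directly on the independence part, which forces $v_i=u_i$ and leaves $I-1$ free parameters, while $\mbf{P}_{\inte}$ ranges over the full $(I-1)^{2}$-dimensional interaction subspace, again giving $(I-1)+(I-1)^{2}=I(I-1)$.
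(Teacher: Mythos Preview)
Your proposal is correct and follows essentially the same route as the paper: both identify $\mcal{S}^{I^{2}}_{\gmh}=\mcal{S}^{I^{2}}_{\syind}\oplus\mcal{S}^{I^{2}}_{\inte}$ via the vanishing of $\mbf{P}_{\skind}$ and then count dimensions. The only difference is cosmetic---the paper sums $\dim\mcal{S}^{I^{2}}_{\syind}=I-1$ and $\dim\mcal{S}^{I^{2}}_{\inte}=(I-1)^{2}$ directly (asserting the former without proof), whereas you compute $\dim\mcal{S}^{I^{2}}_{\skind}=I-1$ explicitly in clr coordinates and subtract from $I^{2}-1$; your self-contained variant at the end is exactly the paper's count, and your clr argument in Step~2 in fact supplies the justification the paper omits.
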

%This theorem establishes that geometric marginal homogeneous PTs form a linear subspace of the simplex $\mcal{S}^{I^{2}}$. 
%This is a crucial result as it allows us to apply linear algebraic concepts, such as orthogonal projections, within the framework of Aitchison geometry.
Similar to the case of quasi-symmetric PT, this is a crucial result as it allows us to apply orthogonal projection within the framework of Aitchison geometry.
%Building upon this subspace structure, we can consider the problem of finding the closest geometric marginal homogeneous PT to any given PT.
%This is essentially an orthogonal projection problem in the Aitchison geometry.
The following theorem provides an explicit formula for this projection.
\begin{theorem}[orthogonal projection]
  \label{thm6}
  Let $\mbf{P}\in\mcal{S}^{I^{2}}$ be $I\times I$ table. The closest geometric marginal homogeneous PT to $\mbf{P}$ is $\mbf{P}_{\gmh} = \Big(0.5 \odot \big(\mbf{P}_{\ind} \oplus T(\mbf{P}_{\ind})\big)\Big)\oplus\mbf{P}_{\inte}$ in the sense of the Aitchison distance in $\mcal{S}^{I^{2}}$.
\end{theorem}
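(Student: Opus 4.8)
The plan is to reduce Theorem~\ref{thm6} to the orthogonal decomposition of Theorem~\ref{thm1} together with the symmetric projection formula recalled in Subsection~\ref{subsec2.3}, in exact parallel with the argument behind Theorem~\ref{thm4} for the quasi-symmetric case. First I would re-express the GMH subspace in terms of the four-way decomposition. By Definition~\ref{def4}, a table $\mbf{P}$ lies in $\mcal{S}_{\gmh}^{I^{2}}$ precisely when its independent component is symmetric, i.e.\ $\mbf{P}_{\ind}=T(\mbf{P}_{\ind})$. Since a transpose of an independent table is again independent (the transpose merely interchanges the roles of $row(\mbf{P})$ and $col(\mbf{P})$), $T$ maps $\mcal{S}^{I^{2}}_{\ind}$ onto itself; hence we may split $\mbf{P}_{\ind}=\mbf{P}_{\syind}\oplus\mbf{P}_{\skind}$ with $\mbf{P}_{\syind}\in\mcal{S}^{I^{2}}_{\syind}$, $\mbf{P}_{\skind}\in\mcal{S}^{I^{2}}_{\skind}$, and the GMH condition becomes $\mbf{P}_{\skind}=\mbf{0}$. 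Therefore
\[
  \mcal{S}_{\gmh}^{I^{2}} = \mcal{S}^{I^{2}}_{\syind}\oplus\mcal{S}^{I^{2}}_{\syint}\oplus\mcal{S}^{I^{2}}_{\skint},
\]
which by Theorem~\ref{thm1} is exactly the orthogonal complement of $\mcal{S}^{I^{2}}_{\skind}$ in $\mcal{S}^{I^{2}}$ (this also recovers the dimension count of Theorem~\ref{thm5}).

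Next, by the Hilbert projection theorem the closest element of $\mcal{S}_{\gmh}^{I^{2}}$ to $\mbf{P}$ is obtained by deleting the $\mcal{S}^{I^{2}}_{\skind}$-component, namely $\mbf{P}_{\gmh}=\mbf{P}_{\syind}\oplus\mbf{P}_{\syint}\oplus\mbf{P}_{\skint}=\mbf{P}\ominus\mbf{P}_{\skind}$, with residual $\mbf{P}\ominus\mbf{P}_{\gmh}=\mbf{P}_{\skind}\in(\mcal{S}_{\gmh}^{I^{2}})^{\perp}$ by the mutual orthogonality of the four subspaces. It then remains to check that the closed-form expression in the statement coincides with this projection. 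Because $T$ acts on $\mcal{S}^{I^{2}}_{\ind}$ as the identity on its symmetric part and as inversion on its skew-symmetric part (i.e.\ $T(\mbf{P}_{\ind})=\mbf{P}_{\syind}\ominus\mbf{P}_{\skind}$), we get $\mbf{P}_{\ind}\oplus T(\mbf{P}_{\ind})=2\odot\mbf{P}_{\syind}$, so $0.5\odot\big(\mbf{P}_{\ind}\oplus T(\mbf{P}_{\ind})\big)=\mbf{P}_{\syind}$; equivalently, by Subsection~\ref{subsec2.3} this is just the nearest symmetric PT to $\mbf{P}_{\ind}$. Since $\mbf{P}_{\inte}=\mbf{P}_{\syint}\oplus\mbf{P}_{\skint}$ by Theorem~\ref{thm1}, perturbing yields $\big(0.5\odot(\mbf{P}_{\ind}\oplus T(\mbf{P}_{\ind}))\big)\oplus\mbf{P}_{\inte}=\mbf{P}_{\syind}\oplus\mbf{P}_{\syint}\oplus\mbf{P}_{\skint}=\mbf{P}_{\gmh}$, which is the claimed minimizer. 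For a cell-level statement one substitutes $p^{\ind}_{ij}\propto(\prod_{k,l}p_{kj}p_{il})^{1/I^{2}}$ and $p^{\inte}_{ij}\propto(\prod_{k,l}p_{ij}/(p_{kj}p_{il}))^{1/I^{2}}$ into $p^{\gmh}_{ij}\propto(p^{\ind}_{ij}p^{\ind}_{ji})^{1/2}\,p^{\inte}_{ij}$ to obtain an explicit formula in the $p_{ij}$.

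I expect the only real obstacle to be bookkeeping rather than any new idea: one must verify carefully that $T$ respects the independence/interaction splitting — so that $\mbf{P}_{\syind},\mbf{P}_{\skind}$ genuinely lie in $\mcal{S}^{I^{2}}_{\ind}$ while $\mbf{P}_{\syint},\mbf{P}_{\skint}$ lie in $\mcal{S}^{I^{2}}_{\inte}$ — and that every orthogonality invoked is one already supplied by Theorem~\ref{thm1}. Once this is in place, the identification of the stated expression with $\mbf{P}\ominus\mbf{P}_{\skind}$, and hence with the Aitchison-closest GMH table, is routine. A remark worth including is that this result is the mirror image of Theorem~\ref{thm4}: projecting onto $\mcal{S}_{\gmh}^{I^{2}}$ removes the skew-symmetric \emph{independence} part $\mbf{P}_{\skind}$, whereas projecting onto $\mcal{S}_{\qs}^{I^{2}}$ removes the skew-symmetric \emph{interaction} part $\mbf{P}_{\skint}$, which is precisely what sets up the orthogonal decomposition of the simplicial skewness.
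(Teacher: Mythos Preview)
Your proposal is correct and rests on the same idea as the paper's proof: both show that the residual $\mbf{P}\ominus\mbf{P}_{\gmh}$ lies in $\mcal{S}^{I^{2}}_{\skind}$, which is orthogonal to $\mcal{S}^{I^{2}}_{\gmh}=\mcal{S}^{I^{2}}_{\syind}\oplus\mcal{S}^{I^{2}}_{\inte}$. The only difference is packaging: you invoke Theorem~\ref{thm1} and the Hilbert projection theorem to read off the projection as $\mbf{P}\ominus\mbf{P}_{\skind}$ and then identify this with the stated formula, whereas the paper writes out the $\clr$ coordinates of $\mbf{P}\ominus\mbf{P}_{\gmh}$ and $\mbf{P}_{\gmh}\ominus\mbf{Q}$ explicitly and verifies $\langle\mbf{P}\ominus\mbf{P}_{\gmh},\mbf{P}_{\gmh}\ominus\mbf{Q}\rangle_{A}=0$ for arbitrary $\mbf{Q}\in\mcal{S}^{I^{2}}_{\gmh}$ by splitting the inner product into an $\mcal{S}^{I^{2}}_{\ind}$--$\mcal{S}^{I^{2}}_{\inte}$ piece and an $\mcal{S}^{I^{2}}_{\skind}$--$\mcal{S}^{I^{2}}_{\syind}$ piece, each vanishing by the known orthogonalities.
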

This theorem provides us with a powerful tool for analyzing the GMH of any given PT.
Cells of $\mbf{P}_{\gmh}$ can be computed from the original $\mbf{P}$ by
\[
  p_{ij}^{\gmh}\propto p_{ij}\left(\frac{g(row_{j}[\mbf{P}])g(col_{i}[\mbf{P}])}{g(row_{i}[\mbf{P}])g(col_{j}[\mbf{P}])}\right)^{1/2}.
\]
and cells of $\mbf{P}_{\skind}$ can be computed from the original $\mbf{P}$ by
\[
 p_{ij}^{\skind}\propto\left(\frac{g(row_{i}[\mbf{P}])g(col_{j}[\mbf{P}])}{g(row_{j}[\mbf{P}])g(col_{i}[\mbf{P}])}\right)^{1/2}.
\]
In particular, the geometric marginals of $\mbf{P}_{\gmh}$ are the geometric means of the row and column geometric marginals of the original $\mbf{P}$.
The interaction component of $\mbf{P}_{\gmh}$ is identical to that of $\mbf{P}$.
To fully understand the degree of departure from GMH in a PT, we need a measure that captures the deviation from this property.
%The concept of simplicial geometric marginal heterogeneity addresses this need.
%It quantifies the extent to which a PT deviates from geometric marginal homogeneity by measuring the Aitchison norm of the component orthogonal to the geometric marginal homogeneous subspace.
%This measure allows us not only to identify whether a PT is geometric marginal homogeneous, but also to compare the degree of heterogeneity across different PTs.
%The following definition formalizes this concept.
The concept of simplicial geometric marginal heterogeneity effectively addresses this need, which the following definition formalizes.
\begin{definition}[Simplicial geometric marginal heterogeneity]
    \label{def5}
    Let $\mbf{P}\in\mcal{S}^{I^{2}}$ be an $I\times I$ table with the orthogonal decomposition, $\mbf{P}=\mbf{P}_{\gmh}\oplus\mbf{P}_{\skind}$.
    The simplicial geometric marginal heterogeneity, which measures the degree of geometric marginal heterogeneity, is defined as $M^{2}(\mbf{P})=\|\mbf{P}_{\skind}\|_{A}^{2}$, with $0\leq M^{2}(\mbf{P})<+\infty$.
\end{definition}
%This definition of simplicial geometric marginal heterogeneity provides a quantitative measure of how much a PT deviates from geometric marginal homogeneity.
%The measure $M^{2}(\mbf{P})$ ranges from 0, indicating perfect geometric marginal homogeneity, to potentially infinite values for extreme heterogeneity.
Interestingly, this measure of geometric marginal heterogeneity is not isolated but relates to other important concepts in the analysis of PTs. Specifically, it forms part of a larger decomposition of the overall skewness of a PT.
This relationship is formalized in the following theorem, which demonstrates how the simplicial skewness of a PT can be orthogonally decomposed into components representing quasi-skewness and geometric marginal heterogeneity.
\begin{theorem}[Orthogonal decomposition of simplicial skewness]
  \label{thm7}
  Given a PT $\mbf{P}$, its simplicial skewness $E^{2}(\mbf{P})$ can be decomposed into simplicial quasi-skewness $Q^{2}(\mbf{P})$ and geometric marginal heterogeneity $M^{2}(\mbf{P})$, that is,
  \begin{align}
    \label{eq:eq2}
    E^{2}(\mbf{P}) = Q^{2}(\mbf{P}) + M^{2}(\mbf{P}).
  \end{align}
\end{theorem}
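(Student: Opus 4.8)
The plan is to reduce the claimed identity to the Pythagorean theorem in the Euclidean Aitchison space $\mcal{S}^{I^{2}}$. By Definitions \ref{def3} and \ref{def5} and Subsection \ref{subsec2.3} we have $E^{2}(\mbf{P})=\|\mbf{P}_{\skw}\|_{A}^{2}$, $Q^{2}(\mbf{P})=\|\mbf{P}_{\skint}\|_{A}^{2}$, and $M^{2}(\mbf{P})=\|\mbf{P}_{\skind}\|_{A}^{2}$, so it suffices to establish the orthogonal splitting
\[
  \mbf{P}_{\skw}=\mbf{P}_{\skind}\oplus\mbf{P}_{\skint},\qquad \langle\mbf{P}_{\skind},\mbf{P}_{\skint}\rangle_{A}=0,
\]
after which $\|\mbf{P}_{\skw}\|_{A}^{2}=\|\mbf{P}_{\skind}\|_{A}^{2}+\|\mbf{P}_{\skint}\|_{A}^{2}$ is immediate.

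First I would write the three skew components explicitly. From the projection formulas of Theorems \ref{thm4} and \ref{thm6}, $\mbf{P}_{\skint}=\mbf{P}\ominus\mbf{P}_{\qs}=0.5\odot\big(\mbf{P}_{\inte}\ominus T(\mbf{P}_{\inte})\big)$ and $\mbf{P}_{\skind}=\mbf{P}\ominus\mbf{P}_{\gmh}=0.5\odot\big(\mbf{P}_{\ind}\ominus T(\mbf{P}_{\ind})\big)$, while from Subsection \ref{subsec2.3}, $\mbf{P}_{\skw}=0.5\odot\big(\mbf{P}\ominus T(\mbf{P})\big)$. Since $T$ merely permutes coordinates, it is an isomorphism of the vector space $(\mcal{S}^{I^{2}},\oplus,\odot)$ and therefore distributes over $\oplus$ and $\odot$; applying it to $\mbf{P}=\mbf{P}_{\ind}\oplus\mbf{P}_{\inte}$ gives $T(\mbf{P})=T(\mbf{P}_{\ind})\oplus T(\mbf{P}_{\inte})$, whence
\[
  \mbf{P}_{\skw}=0.5\odot\big(\mbf{P}_{\ind}\oplus\mbf{P}_{\inte}\ominus T(\mbf{P}_{\ind})\ominus T(\mbf{P}_{\inte})\big)=\mbf{P}_{\skind}\oplus\mbf{P}_{\skint}.
\]
Equivalently, one may invoke Theorem \ref{thm1}: $\mcal{S}^{I^{2}}_{\skw}=\mcal{S}^{I^{2}}_{\skind}\oplus\mcal{S}^{I^{2}}_{\skint}$ as an orthogonal direct sum, so the orthogonal projection onto $\mcal{S}^{I^{2}}_{\skw}$ equals the $\oplus$-sum of the orthogonal projections onto the two summands.

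For orthogonality, note that $\mbf{P}_{\skind}$ is a $\oplus/\odot$-combination of the independence tables $\mbf{P}_{\ind}$ and $T(\mbf{P}_{\ind})$, hence $\mbf{P}_{\skind}\in\mcal{S}^{I^{2}}_{\ind}$, and likewise $\mbf{P}_{\skint}\in\mcal{S}^{I^{2}}_{\inte}$; since $\mcal{S}^{I^{2}}_{\inte}$ is the Aitchison-orthogonal complement of $\mcal{S}^{I^{2}}_{\ind}$ (Subsection \ref{subsec2.2}), we get $\langle\mbf{P}_{\skind},\mbf{P}_{\skint}\rangle_{A}=0$. The Pythagorean theorem in $\mcal{S}^{I^{2}}$ then yields $E^{2}(\mbf{P})=\|\mbf{P}_{\skw}\|_{A}^{2}=\|\mbf{P}_{\skind}\|_{A}^{2}+\|\mbf{P}_{\skint}\|_{A}^{2}=M^{2}(\mbf{P})+Q^{2}(\mbf{P})$.

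The one nontrivial point — and thus the main obstacle — is the compatibility of the two decompositions used in the second paragraph: that skew-symmetrization commutes with the independence/interaction split so that $\mbf{P}_{\skw}$ really decomposes as $\mbf{P}_{\skind}\oplus\mbf{P}_{\skint}$. This rests on checking that $T$, being a coordinate permutation, commutes with the row- and column-averaging (clr-) projections that define $\mbf{P}_{\ind}$ and $\mbf{P}_{\inte}$; concretely, $T$ interchanges the row- and column-marginal subspaces, so it maps $\mcal{S}^{I^{2}}_{\ind}$ onto itself and $\mcal{S}^{I^{2}}_{\inte}$ onto itself, and since it is an Aitchison isometry it therefore commutes with the orthogonal projections onto these subspaces. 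Once this is recorded, the distributivity computation above is routine. Since this compatibility is already implicit in the well-definedness of the four-way decomposition of Theorem \ref{thm1}, the whole argument may alternatively be phrased purely in terms of that theorem.
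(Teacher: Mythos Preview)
Your proof is correct and follows essentially the same approach as the paper: both establish $\mbf{P}_{\skw}=\mbf{P}_{\skind}\oplus\mbf{P}_{\skint}$ with the two summands orthogonal (the paper invokes Theorem~\ref{thm1} directly, as you note in your alternative phrasing), and then apply the Pythagorean identity. Your version is more explicit about why the skew-symmetrization commutes with the independence/interaction split, but this extra care is consistent with, not a departure from, the paper's argument.
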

The orthogonal decomposition of simplicial skewness into quasi-skewness and geometric marginal heterogeneity provides a powerful tool for analyzing the structure of square PTs.
%This decomposition allows separate quantification of two distinct aspects of a PT's structure, offering new insights into asymmetry in square CTs.
This decomposition allows separate quantification of two distinct aspects of a PT's structure, offering new insights into skewness in square CTs.
%The orthogonality of these components enables the attribution of the observed asymmetry to specific structural characteristics, which is particularly valuable in statistical analysis.
The orthogonality of these components enables the attribution of the observed skewness to specific structural characteristics, which is particularly valuable in statistical analysis.

Detailed description of skewness can be obtained using the following definition of cell quasi-skewness and geometric marginal heterogeneity.
\begin{definition}
  \label{def:6}
  Let $\mbf{P}_{\skint}\in\mcal{S}^{I^{2}}_{\skint}$ and $\mbf{P}_{\skind}\in\mcal{S}^{I^{2}}_{\skind}$ be an $I\times I$ skew-symmetric interaction table.
  The coefficient $\clr_{ij}(\mbf{P}_{\skint})$ and $\clr_{ij}(\mbf{P}_{\skind})$ is called the $(i, j)$-th cell quasi-skewness and cell geometric marginal heterogeneity. 
\end{definition}
Simplicial quasi-skewness is the square sum of cell quasi-skewnesses and simplicial geometric marginal heterogeneity is the square sum of cell geometric marginal heterogeneities:
\begin{align*}
  Q^{2}(\mbf{P}) &= \sum_{i=1}^{I}\sum_{j=1}^{I}\big(\clr_{ij}(\mbf{P}_{\skint})\big)^{2}, \\
  M^{2}(\mbf{P}) &= \sum_{i=1}^{I}\sum_{j=1}^{I}\big(\clr_{ij}(\mbf{P}_{\skind})\big)^{2}.
\end{align*}

The cell-wise contributions to quasi-skewness and geometric marginal heterogeneity are presented as $I \times I$ tables, which we call quasi-skewness array and geometric marginal heterogeneity array, respectively.
In these arrays, the entries are the signed proportions or percents of the simplicial quasi-skewness and simplicial geometric marginal heterogeneity, calculated as $\mathrm{sgn}\big(\clr_{ij}(\mbf{P}_{\skint})\big)\big(\clr_{ij}(\mbf{P}_{\skint})\big)^2/Q^2(\mbf{P})$ for the quasi-skewness array and $\mathrm{sgn}\big(\clr_{ij}(\mbf{P}_{\skind})\big)\big(\clr_{ij}(\mbf{P}_{\skind})\big)^2/M^2(\mbf{P})$ for the geometric marginal heterogeneity array. 
The strength of the quasi-skewness and geometric marginal heterogeneity is quantified, and the direction is illustrated by the sign, indicating whether its probability is a surplus or deficit compared with the corresponding quasi-symmetric PT or geometric marginal homogeneous PT, respectively.
%These arrays provide a detailed view of how each cell contributes to the quasi-skewness and geometric marginal heterogeneity, allowing for a more comprehensive understanding of the structure of asymmetry in the data.
These arrays provide a detailed view of how each cell contributes to the quasi-skewness and geometric marginal heterogeneity, allowing for a more comprehensive understanding of the structure of skewness in the PTs.
%By examining these arrays alongside the overall skewness array, we can gain insights into the specific sources and patterns of asymmetry that might not be apparent from the aggregate measures alone.
By examining these arrays alongside the overall skewness array, we can gain insights into the specific sources and patterns of skewness that might not be apparent from the aggregate measures alone.

\section{Illustrative example}\label{sec5}
%In this section, we present an illustrative example to demonstrate the application of our proposed methodology to real-world data.
\begin{table}[t]
\caption{Data of unaided distance vision of 7477 women aged 30-39 \citep{stuart_estimation_1953}}\label{tab:stuart_data}
\begin{tabular}{|c|cccc|c|}
    \toprule
    \multirow{2}{*}{\diagbox{Right eye}{Left eye}} & Highest & Second & Third & Lowest & Total \\
     & grade & grade & grade & grade & \\
    \hline
    Highest grade & 1520 & 266 & 124 & 66 & 1976 \\
    Second grade & 234 & 1512 & 432 & 78 & 2256 \\
    Third grade & 117 & 362 & 1772 & 205 & 2456 \\
    Lowest grade & 36 & 82 & 179 & 492 & 789 \\
    \hline
    Total & 1907 & 2222 & 2507 & 841 & 7477 \\
    \hline
  \end{tabular}
  \caption{Sample proportions of unaided distance vision data}
  \label{tab:stuart_proportions}
  \begin{tabular}{|c|cccc|c|}
    \hline
    \multirow{2}{*}{\diagbox{Right eye}{Left eye}} & Highest & Second & Third & Lowest & Geometric \\
     & grade & grade & grade & grade & margin\\
    \hline
    Highest grade & 0.2033 & 0.0356 & 0.0166 & 0.0088 & 0.2285 \\
    Second grade & 0.0313 & 0.2022 & 0.0578 & 0.0104 &0.3149 \\
    Third grade & 0.0156 & 0.0484 & 0.2370 & 0.0274 & 0.3356 \\
    Lowest grade & 0.0048 & 0.0110 & 0.0239 & 0.0658 & 0.1210 \\
    \hline
    Geometric margin & 0.1893 & 0.3181 & 0.3474 & 0.1452 & 1.0000 \\
    \hline
  \end{tabular}
\end{table}
To illustrate the application and effectiveness of our proposed methodology, we will use a dataset from \citet{stuart_estimation_1953} that describes the unaided distance vision of 7477 women aged 30-39.
The data classifies vision into four categories for each eye: Highest, Second, Third, and Lowest grades.
Table \ref{tab:stuart_data} shows the raw counts of unaided distance vision for 7477 women, while Table \ref{tab:stuart_proportions} presents the corresponding sample proportions $\hat{\mbf{P}}$.
%This structure of the $4\times4$ square table is ideal for our analysis, allowing us to examine both the symmetry of the vision grades between the left and right eyes and the potential asymmetries that may exist.
This structure of the $4\times4$ square table is ideal for our analysis, allowing us to examine both the symmetry of the vision grades between the left and right eyes and the potential skew-symmetry that may exist.
%By applying our methods based on Aitchison geometry on the simplex to this real-world data, we aim to reveal insights into the structure of square CTs that might be missed by traditional analyses.
%The analysis of Stuart's eyesight data using our proposed geometric approach may reveal subtle asymmetries in the distribution of vision grades between left and right eyes, potentially contributing to our understanding of binocular vision patterns in this population.
The analysis of Stuart's eyesight data using our proposed geometric approach may reveal subtle skewnesses in the distribution of vision grades between left and right eyes, potentially contributing to our understanding of binocular vision patterns in this population.

The Aitchison norm of the original table $\|\hat{\mbf{P}}\|_{A}$ is $20.560$.
As shown in \citet{nakamura_symmetry_2024}, Tables \ref{tab:stuart_symmetry} and \ref{tab:stuart_skew-symmetry} show the closest symmetric PT and the corresponding skew-symmetric PT for the unaided distance vision data.
The norm of its symmetric component $\|\hat{\mbf{P}}_{\sym}\|_{A}$ is $20.341$. 
\begin{table}[t]
  \centering
  \caption{Nearest symmetric PT of Table \ref{tab:stuart_proportions} in the sense of Aitchison distance \citep{nakamura_symmetry_2024}}
  \label{tab:stuart_symmetry}
  \begin{tabular}{|c|cccc|c|}
    \hline
    \multirow{2}{*}{\diagbox{Right eye}{Left eye}} & Highest & Second & Third & Lowest & Geometric \\
     & grade & grade & grade & grade & margin\\
    \hline
    Highest grade & 0.2036 & 0.0334 & 0.0161 & 0.0065 & 0.2596 \\
    Second grade & 0.0334 & 0.2025 & 0.0530 & 0.0107 & 0.2996 \\
    Third grade & 0.0161 & 0.0530 & 0.2373 & 0.0257 & 0.3320 \\
    Lowest grade & 0.0065 & 0.0107 & 0.0257 & 0.0659 & 0.1088 \\
    \hline
    Geometric margin & 0.2596 & 0.2996 & 0.3320 & 0.1088 & 1.0000 \\
    \hline
  \end{tabular}
  \caption{Skew-symmetric PT orthogonal to Table \ref{tab:stuart_symmetry}  \citep{nakamura_symmetry_2024}}
  \label{tab:stuart_skew-symmetry}
  \begin{tabular}{|c|cccc|c|}
    \hline
    \multirow{2}{*}{\diagbox{Right eye}{Left eye}} & Highest & Second & Third & Lowest & Geometric \\
     & grade & grade & grade & grade & margin\\
    \hline
    Highest grade & 0.0621 & 0.0662 & 0.0639 & 0.0840 & 0.2762 \\
    Second grade & 0.0582 & 0.0621 & 0.0678 & 0.0605 & 0.2486 \\
    Third grade & 0.0603 & 0.0568 & 0.0621 & 0.0664 & 0.2456 \\
    Lowest grade & 0.0458 & 0.0636 & 0.0580 & 0.0621 & 0.2296 \\
    \hline
    Geometric margin & 0.2264 & 0.2487 & 0.2518 & 0.2731 & 1.0000 \\
    \hline
%  \caption{clr-transformed skew-symmetric PT orthogonal to Table \ref{tab:stuart_symmetry}}
%  \label{tab:stuart_skew-symmetry}
%  \begin{tabular}{|c|cccc|c|}
%    \hline
%    \multirow{2}{*}{\diagbox{Right eye}{Left eye}} & Highest & Second & Third & Lowest &  \\
%     & grade & grade & grade & grade & Margin\\
%    \hline
%    Highest grade & 0 & 0.0641 & 0.0291 & 0.3031 & 0.3962 \\
%    Second grade & -0.0641 & 0 & 0.0884 & -0.0250 & -0.0007 \\
%    Third grade & -0.0291 & -0.0884 & 0 & 0.0678 & -0.0496 \\
%    Lowest grade & -0.3031 & 0.0250 & -0.0678 & 0 & -0.3459 \\
%    \hline
%    Margin & -0.3962 & 0.0007 & 0.0496 & 0.3459 & 0 \\
%    \hline
  \end{tabular}
\end{table}
%We analyze the unaided distance vision data of $7,477$ women aged 30-39, as reported by \citet{stuart_estimation_1953} and shown in Table \ref{tab:stuart_data}.
%The data classifies vision into four categories for each eye: Highest, Second, Third, and Lowest grades.
%Table \ref{tab:stuart_proportions} shows the sample proportions of the data, which we use as our estimated PT $\hat{\mbf{P}}$.
%We apply our geometric approach to analyze the symmetry, quasi-symmetry, and geometric marginal homogeneity of this table.
%\begin{table}[t]
%\centering
%\caption{Statistics}
%\label{tab:statistics}
%\begin{tabular}{cccc}
%\hline
%Statistics & Sample value & 0.05 crit-v & p-value \\
%\hline
%$\|\mbf{P}\|_{A}$ & $20.560$ & & \\
%$\|\mbf{P}_{\sym}\|_{A}$ & $20.341$ & & \\
%$E^{2}(\mbf{P})$ & $0.219$ & $0.131$ & $49\times10^{-4}$ \\
%$Q^{2}(\mbf{P})$ & $0.080$ & $0.076$ & $43\times10^{-3}$ \\
%$M^{2}(\mbf{P})$ & $0.139$ & $0.077$ & $51\times10^{-4}$ \\
%\hline
%\end{tabular}
%\end{table}
%The results of our analysis are summarized in Table \ref{tab:statistics}.
%The Aitchison norm of the original table $\|\mbf{P}\|_{A}$ is $20.560$, while the norm of its symmetric component $\|\mbf{P}_{\text{sym}}\|_{A}$ is $20.341$. 
%The simplicial skewness $E^2(\mbf{P})$, which measures the overall asymmetry of the table, is $0.219$.
The simplicial skewness $E^2(\hat{\mbf{P}})$, which measures the overall skewness of the table, is $0.219$.
%This value exceeds the critical value of $0.131$ at the $0.05$ significance level, with a p-value of $0.0049$.
%Therefore, we reject the hypothesis of symmetry, confirming that there is a significant skewness in the vision data between left and right eyes.

%To further investigate the nature of this asymmetry, we decompose it into quasi-skewness and geometric marginal heterogeneity.
To further investigate the nature of this skewness, we decompose it into quasi-skewness and geometric marginal heterogeneity.
\begin{table}[t]
  \caption{Nearest quasi-symmetric PT of Table \ref{tab:stuart_proportions} in the sense of Aitchison distance}
  \label{tab:stuart_quasi-symmetry}
  \centering
  \begin{tabular}{|c|cccc|c|}
    \hline
    \multirow{2}{*}{\diagbox{Right eye}{Left eye}} & Highest & Second & Third & Lowest & Geometric \\
     & grade & grade & grade & grade & margin\\
    \hline
    Highest grade & $0.2034$ & $0.0369$ & $0.0180$ & $0.0079$ & $0.2285$ \\
    Second grade & $0.0302$ & $0.2023$ & $0.0536$ & $0.0117$ & $0.3149$ \\
    Third grade & $0.0144$ & $0.0523$ & $0.2371$ & $0.0276$ & $0.3356$ \\
    Lowest grade & $0.0054$ & $0.0098$ & $ 0.0238$ & $0.0658$ & $0.1210$ \\
    \hline
    Geometric margin & $0.1893$ & $0.3181$ & $0.3474$ & $0.1452$ & $1.0000$ \\
    \hline
  \end{tabular}
  \caption{Skew-symmetric interaction PT orthogonal to Table \ref{tab:stuart_quasi-symmetry}}
  \label{tab:stuart_quasi-skew-symmetry}
  \begin{tabular}{|c|cccc|c|}
    \hline
    \multirow{2}{*}{\diagbox{Right eye}{Left eye}} & Highest & Second & Third & Lowest & Geometric \\
     & grade & grade & grade & grade & margin\\
    \hline
    Highest grade & $0.0623$ & $0.0602$ & $0.0574$ & $0.0701$ & $0.2500$ \\
    Second grade & $0.0582$ & $0.0623$ & $0.0678$ & $0.0605$ & $0.2500$ \\
    Third grade & $ 0.0677$ & $0.0578$ & $0.0623$ & $0.0620$ & $0.2500$ \\
    Lowest grade & $0.0554$ & $0.0637$ & $0.0627$ & $0.0623$ & $0.2500$ \\
    \hline
    Geometric margin & $0.2500$ & $0.2500$ & $0.2500$ & $0.2500$ & $1.0000$ \\
    \hline
  \end{tabular}
\end{table}
Table \ref{tab:stuart_quasi-symmetry} shows the nearest quasi-symmetric PT and its local odds ratio matrix is
\[
\begin{pmatrix}
     36.9231 & 0.5417 & 0.4997 \\
     0.5417 & 17.1326 & 0.5345 \\
     0.4997 & 0.5345 & 23.7687
\end{pmatrix}.
\]
Table \ref{tab:stuart_quasi-symmetry} shows the skew-symmetric interaction PT orthogonal to Table \ref{tab:stuart_quasi-skew-symmetry}.
\begin{table}[t]
  \caption{Nearest geometric marginal homogeneous PT of Table \ref{tab:stuart_proportions} in the sense of Aitchison distance}
  \label{tab:stuart_geometric marginal homogeneity}
  \centering
  \begin{tabular}{|c|cccc|c|}
    \hline
    \multirow{2}{*}{\diagbox{Right eye}{Left eye}} & Highest & Second & Third & Lowest & Geometric \\
     & grade & grade & grade & grade & margin\\
    \hline
    Highest grade & $0.2034$ & $0.0322$ & $0.0148$ & $0.0073$ & $0.2083$ \\
    Second grade & $0.0346$ & $0.2023$ & $0.0571$ & $0.0096$ & $0.3169$ \\
    Third grade & $0.0175$ & $0.0490$ & $0.2371$ & $0.0255$ & $0.3420$ \\
    Lowest grade & $0.0058$ & $0.0120$ & $0.0258$ & $0.0658$ & $0.1328$ \\
    \hline
    Geometric margin & $0.2083$ & $0.3169$ & $0.3420$ & $0.1328$ & $1.0000$ \\
    \hline
  \end{tabular}
  \caption{Skew-symmetric independent PT orthogonal to Table \ref{tab:stuart_geometric marginal homogeneity}}
  \label{tab:stuart_geometric marginal heterogeneity}
  \centering
  \begin{tabular}{|c|cccc|c|}
    \hline
    \multirow{2}{*}{\diagbox{Right eye}{Left eye}} & Highest & Second & Third & Lowest & Geometric \\
     & grade & grade & grade & grade & margin\\
    \hline
    Highest grade & $0.0622$ & $0.0687$ & $0.0696$ & $0.0749$ & $0.2754$ \\
    Second grade & $0.0563$ & $0.0622$ & $0.0630$ & $0.0678$ & $0.2494$ \\
    Third grade & $0.0556$ & $0.0615$ & $0.0622$ & $0.0670$ & $0.2464$ \\
    Lowest grade & $0.0517$ & $0.0571$ & $0.0578$ & $0.0622$ & $0.2288$ \\
    \hline
    Geometric margin & $0.2259$ & $0.2495$ & $0.2526$ & $0.2720$ & $1.0000$ \\
    \hline
  \end{tabular}
\end{table}
Table \ref{tab:stuart_geometric marginal homogeneity} shows the closest geometric marginal homogeneous PT and Table \ref{tab:stuart_geometric marginal heterogeneity} shows the geometric marginal heterogeneous PT orthogonal to Table \ref{tab:stuart_geometric marginal homogeneity}.
%The simplicial quasi-skewness $Q^2(\mbf{P})$ is $0.080$, which is slightly higher than its critical value of $0.076$ at the $0.05$ significance level (p-value = $0.043$).
The simplicial quasi-skewness $Q^2(\hat{\mbf{P}})$ is $0.080$ and the geometric marginal heterogeneity $M^2(\hat{\mbf{P}})$ is $0.139$.
%This suggests a mild departure from quasi-symmetry in the data.
%The geometric marginal heterogeneity $M^2(\mbf{P})$ is $0.139$, which is substantially higher than its critical value of $0.077$ at the $0.05$ significance level (p-value = $0.0051$).
%This indicates a significant departure from geometric marginal homogeneity.
%These results reveal that the asymmetry in the vision data is primarily due to geometric marginal heterogeneity rather than quasi-skewness.
These results reveal that the skewness in the vision data is primarily due to geometric marginal heterogeneity rather than quasi-skewness.
%This suggests that the main source of asymmetry lies in the differences between the geometric marginals of the grade of vision of the left and right eyes, rather than in the internal structure of associations between grades.
%In practical terms, this analysis indicates that there are systematic differences in the overall distribution of visual grades between the left and right eyes in this population of women in their factory.
In practical terms, this analysis indicates that there are relative differences in the overall distribution of visual grades between the left and right eyes in this population of women in their factory.
%The significant geometric marginal heterogeneity suggests that one eye tends to have overall better (or worse) vision grades compared to the other eye.
However, the relatively small quasi-skewness indicates that the pattern of associations between specific vision grades for left and right eyes is fairly symmetric.
%This example demonstrates how our proposed methodology can provide a more nuanced understanding of asymmetry in square CTs.
%By decomposing the overall asymmetry into quasi-skewness and geometric marginal heterogeneity, we can gain insights into the nature and sources of asymmetry that might not be apparent from traditional analyses.

\begin{table}[t]
  \caption{Skewness array for Table \ref{tab:stuart_data} \citep{nakamura_symmetry_2024}}
  \label{table:skewness array}
  \centering
  \begin{tabular}{|r|r|r|r|r|}
    \hline
    \multirow{2}{*}{\diagbox{Right eye}{Left eye}} & Highest & Second & Third & Lowest \\
     & grade & grade & grade & grade \\
    \hline
    Highest grade & $0.00$ & $1.87$ & $0.38$ & $\mbf{41.80}$ \\
    Second grade & $-1.87$ & $0.00$ & $3.56$ & $-0.28$ \\
    Third grade & $-0.38$ & $-3.56$ & $0.00$ & $2.10$ \\
    Lowest grade & $\mbf{-41.80}$ & $0.28$ & $-2.10$ & $0.00$ \\
    \hline
  \end{tabular}
%
%  \bigskip
%     
  \caption{Quasi-skewness array for Table \ref{tab:stuart_data}}
  \label{table:quasi-skewness array}
  \centering
  \begin{tabular}{|r|r|r|r|r|}
    \hline
    \multirow{2}{*}{\diagbox{Right eye}{Left eye}} & Highest & Second & Third & Lowest \\
     & grade & grade & grade & grade \\
    \hline
    Highest grade & $0.00$ & $-1.54$ & $-8.47$ & $\mbf{17.24}$ \\
    Second grade & $1.54$ & $0.00$ & $7.24$ & $-15.46$ \\
    Third grade & $8.47$ & $-7.24$ & $0.00$ & $-0.049$ \\
    Lowest grade & $\mbf{-17.24}$ & $15.46$ & $0.049$ & $0.00$ \\
    \hline
  \end{tabular}
%
%  \bigskip
%
  \caption{Geometric marginal heterogeneity array for Table \ref{tab:stuart_data}}
  \label{table:geometric marginal heterogeneity array}
  \centering
  \begin{tabular}{|r|r|r|r|r|}
    \hline
    \multirow{2}{*}{\diagbox{Right eye}{Left eye}} & Highest & Second & Third & Lowest \\
     & grade & grade & grade & grade \\
    \hline
    Highest grade & $0.00$ & $7.06$ & $8.90$ & $\mbf{24.67}$ \\
    Second grade & $-7.06$ & $0.00$ & $0.11$ & $5.34$ \\
    Third grade & $-8.90$ & $-0.11$ & $0.00$ & $3.93$ \\
    Lowest grade & $\mbf{-24.67}$ & $-5.34$ & $-3.93$ & $0.00$ \\
    \hline
  \end{tabular}
\end{table}

%To further analyze the cell-wise contributions to asymmetry, quasi-skewness, and geometric marginal heterogeneity, we present the corresponding arrays in Tables \ref{table:skewness array}, \ref{table:quasi-skewness array}, and \ref{table:geometric marginal heterogeneity array}.
To further analyze the cell-wise contributions to skewness, quasi-skewness, and geometric marginal heterogeneity, we present the corresponding arrays in Tables \ref{table:skewness array}, \ref{table:quasi-skewness array}, and \ref{table:geometric marginal heterogeneity array}.
These arrays show the percentage contribution of each cell to the respective measure, with the sign indicating the direction of the effect.
%The values in each array are calculated as the squared clr-coefficient of the corresponding component, divided by the total measure, and multiplied by the sign of the clr-coefficient.
%In the skewness array (Table \ref{table:skewness array}) provided in \citet{nakamura_symmetry_2024}, we observe that the largest contributions to overall asymmetry come from the (Highest, Lowest) and (Lowest, Highest) cells, with values of 41.80\% and -41.80\% respectively.
In the skewness array (Table \ref{table:skewness array}) provided in \citet{nakamura_symmetry_2024}, we observe that the largest contributions to overall skewness come from the (Highest, Lowest) and (Lowest, Highest) cells, with values of $41.80\%$ and $-41.80\%$, respectively.
%This indicates a strong asymmetry between these extreme categories.
This indicates a strong skewness between these extreme categories.
%The quasi-skewness array (Table \ref{table:quasi-skewness array}) reveals that the main contributors to quasi-skewness are also the (Highest, Lowest) and (Lowest, Highest) cells, but with somewhat smaller magnitudes (17.24\% and -17.24\%).

The quasi-skewness array (Table \ref{table:quasi-skewness array}) reveals that the main contributors to quasi-skewness are also the (Highest, Lowest) and (Lowest, Highest) cells, but with somewhat smaller magnitudes ($17.24\%$ and $-17.24\%$).
Notably, the pattern of contributions across other cells differs substantially from the skewness array.
The second-largest contributions come from the (Second, Lowest) and (Lowest, Second) cells ($-15.46\%$ and $15.46\%$), followed by the (Highest, Third) and (Third, Highest) cells ($-8.47\%$ and $8.47\%$). %This pattern contrasts with the skewness array, where the second-largest contributions were from the (Second, Third) and (Third, Second) cells. 
These differences suggest that the cells contributing to skew-symmetry differ considerably when only the interaction component is considered.

Finally, the geometric marginal heterogeneity array (Table \ref{table:geometric marginal heterogeneity array}) shows that the (Highest, Lowest) and (Lowest, Highest) cells are the largest contributors ($24.67\%$ and $-24.67\%$).
This indicates that these cells also contribute substantially to the difference in geometric marginals between the left and right eyes.
Interestingly, this array exhibits yet another distinct pattern compared to the quasi-skewness array, with the second-largest contributions coming from the (Highest, Third) and (Third, Highest) cells ($8.90\%$ and $-8.90\%$), followed by (Highest, Second) and (Second, Highest) cells ($7.06\%$ and $-7.06\%$).
These differences highlight how cells contribute differently to geometric marginal heterogeneity than to skewness and quasi-skewness, with variations in both the magnitude and direction of contributions across different cell pairs.
These array analyses provide a detailed view of how each cell contributes to the overall asymmetry and its components, allowing a more comprehensive understanding of the structure of asymmetry in the data.
%By decomposing the overall asymmetry into quasi-skewness and geometric marginal heterogeneity, we can gain insights into the nature and sources of asymmetry that might not be apparent from traditional analyses.

\section{Conclusions}\label{sec6}
This paper introduces a novel geometric approach to analyzing symmetric structures in square CTs using Aitchison geometry on the simplex.
We propose the concept of geometric marginal homogeneity, demonstrate how it relates to quasi-symmetry and complete symmetry, and prove that it forms a linear subspace in the simplex.
We develop simplicial measures for quasi-skewness and geometric marginal heterogeneity and prove an orthogonal decomposition of overall simplicial skewness.
%New statistical tests based on these measures were proposed.
%This geometric perspective offers a coherent framework for analyzing both symmetry and skew-symmetry, allows for insightful orthogonal decompositions, and enables the development of new statistical measures and tests.
This geometric perspective offers a coherent framework for analyzing both symmetry and skew-symmetry, allows for insightful orthogonal decompositions, and enables the development of new statistical measures.
An illustrative example demonstrated how this approach can complement traditional analyses.
%Future work could explore extensions to multi-way CTs and develop additional geometric measures and tests.
By bridging compositional data analysis and CT analysis, this work opens up new possibilities for understanding and analyzing categorical data across various disciplines.

%\bmhead{Acknowledgements}
%This work was supported by JST SPRING Grant Number JPMJSP2151 and JSPS KAKENHI Grant Number JP20K03756.

\section*{Acknowledgement}
This work was supported by the Research Institute for Mathematical Sciences, an International Joint Usage/Research Center located in Kyoto University.

\section*{Data Availability}
Not applicable.

\section*{Declarations}
\subsection*{Funding}
This work was supported by JST SPRING Grant Number JPMJSP2151 and JSPS KAKENHI Grant Number JP20K03756.

\subsection*{Conflict Interests}
The authors have no relevant financial or non-financial interests to disclose.

\begin{appendices}
\setcounter{theorem}{0}
\section{Proofs of Theorems}
\subsection{Proof of Theorem 1}
\begin{theorem}[Orthogonal decomposition]
  Let $\mbf{P}$ be an $I\times I\in\mcal{S}^{I^{2}}$ table. $\mbf{P}$ can be expressed in a unique way as
  \begin{align*}
    \mbf{P} = \mbf{P}_{\syind}\oplus\mbf{P}_{\skind}\oplus\mbf{P}_{\syint}\oplus\mbf{P}_{\skint},
  \end{align*}
  where $\mbf{P}_{\syind}\in\mcal{S}^{I^{2}}_{\syind}$, $\mbf{P}_{\skind}\in\mcal{S}^{I^{2}}_{\skind}$, $\mbf{P}_{\syint}\in\mcal{S}^{I^{2}}_{\syint}$, and $\mbf{P}_{\skint}\in\mcal{S}^{I^{2}}_{\skint}$.
\end{theorem}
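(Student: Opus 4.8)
The plan is to transport the statement into the clr-plane, where $\oplus$ and $\odot$ become ordinary vector addition and scalar multiplication, and to observe that the two orthogonal splittings already established — $\mcal{S}^{I^{2}} = \mcal{S}^{I^{2}}_{\ind}\oplus\mcal{S}^{I^{2}}_{\inte}$ (Subsection \ref{subsec2.2}) and $\mcal{S}^{I^{2}} = \mcal{S}^{I^{2}}_{\sym}\oplus\mcal{S}^{I^{2}}_{\skw}$ (Subsection \ref{subsec2.3}) — are \emph{mutually compatible}, so that together they produce a single fourfold orthogonal direct-sum decomposition whose pieces are exactly the pairwise intersections of Definition \ref{def1}. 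Since $\clr$ is a linear isometry from $\mcal{S}^{I^{2}}$ onto the Euclidean hyperplane $U=\clr(\mcal{S}^{I^{2}})$ and preserves intersections, it suffices to prove the corresponding orthogonal decomposition of $U$ into $\clr(\mcal{S}^{I^{2}}_{\syind})$, $\clr(\mcal{S}^{I^{2}}_{\skind})$, $\clr(\mcal{S}^{I^{2}}_{\syint})$, $\clr(\mcal{S}^{I^{2}}_{\skint})$.

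First I would record the structural ingredients. Let $T$ denote the transposition map on $\mbb{R}^{I^{2}}$; it is an orthogonal involution, hence self-adjoint, and its $+1$- and $-1$-eigenspaces are $\clr(\mcal{S}^{I^{2}}_{\sym})$ and $\clr(\mcal{S}^{I^{2}}_{\skw})$ (this is exactly the content of the symmetrization projection $\mbf{P}_{\sym}=0.5\odot(\mbf{P}\oplus T(\mbf{P}))$ recalled in Subsection \ref{subsec2.3}, read at the clr level as $\tfrac12(\mathrm{id}+T)$). The crux is the claim that $T$ maps $\clr(\mcal{S}^{I^{2}}_{\ind})$ onto itself. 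This follows from the description of independent tables: an independent PT is a perturbation $row(\mbf{P})\oplus col(\mbf{P})$ of a row-constant and a column-constant table, and transposition sends a row-constant table to a column-constant one and vice versa, so $T\big(row(\mbf{P})\oplus col(\mbf{P})\big)$ is again of this form and therefore independent. (Equivalently, one can verify directly from the clr formulas $\clr_{ij}(row(\mbf{P}))=\tfrac1I\sum_{l}\clr_{il}(\mbf{P})$ and $\clr_{ij}(col(\mbf{P}))=\tfrac1I\sum_{k}\clr_{kj}(\mbf{P})$ that the row-averaging and column-averaging operators are swapped by $T$, whence $\pi_{\ind}\circ T = T\circ\pi_{\ind}$, where $\pi_{\ind}$ is the orthogonal projection onto $\clr(\mcal{S}^{I^{2}}_{\ind})$.) Because $T$ is an isometry preserving $\clr(\mcal{S}^{I^{2}}_{\ind})$, it also preserves its orthogonal complement $\clr(\mcal{S}^{I^{2}}_{\inte})$.

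Granting this, I would finish as follows. Restrict the self-adjoint involution $T$ to the two $T$-invariant subspaces $\clr(\mcal{S}^{I^{2}}_{\ind})$ and $\clr(\mcal{S}^{I^{2}}_{\inte})$ and split each into $\pm1$-eigenspaces:
\[
  \clr(\mcal{S}^{I^{2}}_{\ind}) = \clr(\mcal{S}^{I^{2}}_{\syind})\oplus\clr(\mcal{S}^{I^{2}}_{\skind}),\qquad
  \clr(\mcal{S}^{I^{2}}_{\inte}) = \clr(\mcal{S}^{I^{2}}_{\syint})\oplus\clr(\mcal{S}^{I^{2}}_{\skint}),
\]
each sum orthogonal (eigenspaces of a self-adjoint operator for distinct eigenvalues are orthogonal), and with the four summands being precisely the intersections in Definition \ref{def1}, since the $+1$-eigenspace of $T|_{\clr(\mcal{S}^{I^{2}}_{\ind})}$ is $\clr(\mcal{S}^{I^{2}}_{\ind})\cap\clr(\mcal{S}^{I^{2}}_{\sym})=\clr(\mcal{S}^{I^{2}}_{\syind})$, and so on. Combining with $U=\clr(\mcal{S}^{I^{2}}_{\ind})\oplus\clr(\mcal{S}^{I^{2}}_{\inte})$ shows that $U$ is the orthogonal direct sum of the four clr-images; any vector in $\clr(\mcal{S}^{I^{2}}_{\ind})$ is orthogonal to any vector in $\clr(\mcal{S}^{I^{2}}_{\inte})$, so all four pieces are mutually orthogonal. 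Pulling back through $\clr$ yields the unique expression $\mbf{P}=\mbf{P}_{\syind}\oplus\mbf{P}_{\skind}\oplus\mbf{P}_{\syint}\oplus\mbf{P}_{\skint}$, uniqueness being immediate from the directness of the sum.

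The main obstacle is the invariance $T\big(\mcal{S}^{I^{2}}_{\ind}\big)=\mcal{S}^{I^{2}}_{\ind}$ (equivalently $\pi_{\ind}\circ T=T\circ\pi_{\ind}$); once this commutation is in hand, the remainder is the standard fact that two commuting orthogonal projections — here $\pi_{\ind}$ and $\tfrac12(\mathrm{id}+T)$ — decompose the ambient space into the four pairwise intersections of their ranges and kernels. As a sanity check one can verify that $\dim\mcal{S}^{I^{2}}_{\syind}+\dim\mcal{S}^{I^{2}}_{\skind}+\dim\mcal{S}^{I^{2}}_{\syint}+\dim\mcal{S}^{I^{2}}_{\skint}=I^{2}-1$, confirming that no component has been omitted.
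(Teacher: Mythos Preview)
Your proof is correct and follows essentially the same two-step strategy as the paper: first split $\mbf{P}=\mbf{P}_{\ind}\oplus\mbf{P}_{\inte}$, then apply the symmetric/skew-symmetric splitting inside each piece. The paper's proof is terser and takes for granted precisely the point you isolate as the main obstacle --- that $T$ preserves $\mcal{S}^{I^{2}}_{\ind}$ (equivalently, that $\pi_{\ind}$ and $\tfrac12(\mathrm{id}+T)$ commute) --- so your argument is in effect a more rigorous version of theirs.
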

\begin{proof}
  As stated in \citet{egozcue_independence_2015}, the initial orthogonal decomposition
  \[
    \mbf{P}=\mbf{P}_{\ind}\oplus\mbf{P}_{\inte}
  \]
  is uniquely derived through the orthogonal projection in $\mcal{S}_{\ind}^{I^{2}}$.
  Subsequently, based on the findings of \citet{nakamura_symmetry_2024}, the following two orthogonal decompositions,
  \begin{equation*}
    \mbf{P}_{\ind} = \mbf{P}_{\syind}\oplus\mbf{P}_{\skind}\quad \mathrm{and}\quad \mbf{P}_{\inte} = \mbf{P}_{\syint}\oplus\mbf{P}_{\skint},
  \end{equation*}
  are obtained in a unique way by each orthogonal projection onto $\mcal{S}_{\ind}^{I^{2}}$.
  Consequently, we obtain the unique orthogonal decomposition of $\mbf{P}$ into four components,
  \[
    \mbf{P} = \mbf{P}_{\syind}\oplus\mbf{P}_{\skind}\oplus\mbf{P}_{\syint}\oplus\mbf{P}_{\skint},   
  \]
  in a unique way.
\end{proof}
\subsection{Proof of Theorem 2}
\begin{theorem}[Subspace of quasi-symmetric PT]
  Quasi-symmetric PT given in Definition \ref{def:def2} is equivalent to the quasi-symmetric structure defined by \eqref{eq1}.
\end{theorem}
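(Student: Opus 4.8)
The plan is to translate Definition~\ref{def:def2} — the statement that the interaction component $\mbf{P}_{\inte}$ equals its transpose $T(\mbf{P}_{\inte})$ — into the classical local-odds-ratio condition \eqref{eq1}, using the fact that local odds ratios depend only on the interaction part of the table. First I would recall the explicit cell formula for the interaction component given in Subsection~\ref{subsec2.2}, namely $p_{ij}^{\inte}\propto\big(\prod_{k,l}p_{ij}/(p_{kj}p_{il})\big)^{1/IJ}$, so that in clr-coordinates $\clr_{ij}(\mbf{P}_{\inte}) = \clr_{ij}(\mbf{P}) - \clr_{ij}(row(\mbf{P})) - \clr_{ij}(col(\mbf{P}))$, i.e.\ the log-probability with the row and column geometric-mean effects removed.

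The key computation is that the local odds ratio $\theta_{ij}$ is unchanged if one replaces $\mbf{P}$ by $\mbf{P}_{\inte}$. Indeed, $\log\theta_{ij} = \clr_{ij}(\mbf{P}) + \clr_{i+1,j+1}(\mbf{P}) - \clr_{i,j+1}(\mbf{P}) - \clr_{i+1,j}(\mbf{P})$ (the geometric-mean normalization $g(\mbf{P})$ cancels in this alternating sum of four cells), and when each clr-term is decomposed into its independent plus interaction parts, the row contributions cancel because rows $i$ and $i+1$ each appear once with a $+$ and once with a $-$ sign, and likewise the column contributions cancel. Hence $\log\theta_{ij} = \clr_{ij}(\mbf{P}_{\inte}) + \clr_{i+1,j+1}(\mbf{P}_{\inte}) - \clr_{i,j+1}(\mbf{P}_{\inte}) - \clr_{i+1,j}(\mbf{P}_{\inte})$. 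From this, the condition $\mbf{P}_{\inte} = T(\mbf{P}_{\inte})$, which in clr-coordinates reads $\clr_{ij}(\mbf{P}_{\inte}) = \clr_{ji}(\mbf{P}_{\inte})$ for all $i,j$, immediately gives $\log\theta_{ij} = \log\theta_{ji}$, establishing the forward implication.

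For the converse I would argue that the symmetry of all local odds ratios forces symmetry of the entire interaction array. Writing $a_{ij} = \clr_{ij}(\mbf{P}_{\inte}) - \clr_{ji}(\mbf{P}_{\inte})$, the hypothesis $\theta_{ij}=\theta_{ji}$ says that the discrete ``mixed second difference'' of the skew-symmetric array $(a_{ij})$ vanishes: $a_{ij} + a_{i+1,j+1} - a_{i,j+1} - a_{i+1,j} = 0$ for all $i,j \le I-1$. Such an array must be additively decomposable, $a_{ij} = u_i + v_j$ for some vectors $u, v$; but $(a_{ij})$ is skew-symmetric, so $u_i + v_j = -(u_j + v_i)$, which forces $u_i + v_i$ constant and $u_i - v_i$ constant, hence $u_i + v_j$ constant, and since $a_{ii}=0$ that constant is $0$. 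Therefore $a_{ij}\equiv 0$, i.e.\ $\mbf{P}_{\inte} = T(\mbf{P}_{\inte})$.

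The main obstacle is the converse: I need to be careful that ``vanishing mixed second differences implies additive separability'' is applied to the skew-symmetric array $a_{ij}$ rather than naively to the full clr-interaction array (which is not symmetric in general and whose separability would instead just recover the independence structure). Keeping track of which array the separability argument is applied to, and then exploiting skew-symmetry to kill the additive pieces, is the delicate point; the forward direction and the odds-ratio-invariance computation are routine once the clr bookkeeping is set up. I would also note explicitly that all manipulations are done modulo the overall closure/normalization, consistent with the remark at the end of Subsection~\ref{subsec2.1} that only probability ratios are relevant.
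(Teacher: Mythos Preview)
Your forward direction is correct and essentially matches the paper's argument: local odds ratios depend only on the interaction component, so symmetry of $\mbf{P}_{\inte}$ immediately gives $\theta_{ij}=\theta_{ji}$.

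The converse has a gap. From $a_{ij}=u_i+v_j$ and skew-symmetry $a_{ij}=-a_{ji}$ you correctly obtain $u_i+v_i=0$, hence $v_i=-u_i$ and $a_{ij}=u_i-u_j$. But your further claim that ``$u_i-v_i$ is constant'' does not follow from skew-symmetry: the array $a_{ij}=u_i-u_j$ is already skew-symmetric for \emph{any} vector $u$, so skew-symmetry by itself cannot force $a\equiv 0$. The missing ingredient is that $a_{ij}$ is built from $\clr(\mbf{P}_{\inte})$, whose row sums and column sums vanish (this is precisely what membership in $\mcal{S}^{I^2}_{\inte}$ means in clr-coordinates). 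Then $\sum_j a_{ij}=0$ gives $Iu_i-\sum_k u_k=0$, so $u_i$ is constant and $a_{ij}\equiv 0$. With this one additional line your argument is complete and fully self-contained.

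For comparison, the paper's converse takes a shorter but less self-contained route: it invokes the classical equivalent parametrization $p_{ij}=\alpha_i\beta_j\psi_{ij}$ with $\psi_{ij}=\psi_{ji}$ and then computes $p_{ij}^{\inte}$ directly from this form to see that it is symmetric. Your approach has the advantage of staying entirely inside the clr framework and not importing that external equivalence, at the modest cost of the zero-marginal observation above.
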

\begin{proof}
  Let $\mbf{P} = (p_{ij})$ be an $I \times I$ PT.
  First, we show that our definition implies Caussinus' definition.
  Assume $\mbf{P}$ satisfies Definition \ref{def:def2}, i.e., $\mbf{P}_{\inte} = T(\mbf{P}_{\inte})$.
  Let $\mbf{P}_{\ind} = row(\mbf{P}) \oplus col(\mbf{P})$ where $row(\mbf{P}])$ and $col(\mbf{P})$ are the geometric marginals.
  Then, $p_{ij} \propto g(row_i[\mbf{P}]) \cdot g(col_j[\mbf{P}]) \cdot c_{ij}$ where $c_{ij}$ are the entries of $\mbf{P}_{\inte}$.
  For any $i, j\in\{1,\ldots, I-1\}$, consider the odds ratio,
  \begin{align*}
    \theta_{ij} 
    &= \frac{p_{ij}p_{i+1,j+1}}{p_{i+1,j}p_{i,j+1}} \\
    &= \frac{g(row_i[\mbf{P}]) \cdot g(col_j[\mbf{P}]) \cdot c_{ij} \cdot g(row_{i+1}[\mbf{P}]) \cdot g(col_{j+1}[\mbf{P}]) \cdot c_{i+1,j+1}}{g(row_{i+1}[\mbf{P}]) \cdot g(col_j[\mbf{P}]) \cdot c_{i+1,j} \cdot g(row_i[\mbf{P}]) \cdot g(col_{j+1}[\mbf{P}]) \cdot c_{i,j+1}} \\
    &= \frac{c_{ij}c_{i+1,j+1}}{c_{i+1,j}c_{i,j+1}}.
  \end{align*}
  Since $\mbf{P}_{\inte} = T(\mbf{P}_{\inte})$, we have $c_{ij} = c_{ji}$ for $i, j\in\{1, \ldots, I\}$.
  Therefore, $\theta_{ij} = \theta_{ji}$.
  This is exactly Caussinus' definition of quasi-symmetry.

  Now, we demonstrate that Caussinus' definition implies our definition.
  Assume $\mbf{P}$ satisfies Caussinus' definition, i.e., $\theta_{ij} = \theta_{ji}$ for all $i, j\in\{1,\ldots, I-1\}$. We utilize the equivalent representation of the quasi-symmetry model,
  \[
    p_{ij} = \alpha_i \beta_j \psi_{ij}
  \]
  where $\alpha_i, \beta_j, \psi_{ij} > 0$ and $\psi_{ij} = \psi_{ji}$ for $i, j\in\{1, \ldots, I\}$.
  To prove that this implies our definition, we must decompose $\mbf{P}$ into $\mbf{P}_{\ind}$ and $\mbf{P}_{\inte}$ and demonstrate that $\mbf{P}_{\inte} = T(\mbf{P}_{\inte})$.
  We begin by calculating the geometric means of rows and columns,
  \[
    g(row_i[\mbf{P}]) = \left(\prod_{j=1}^I p_{ij}\right)^{1/I} = \alpha_i \left(\prod_{j=1}^I \beta_j \psi_{ij}\right)^{1/I},
  \]
  \[
    g(col_j[\mbf{P}]) = \left(\prod_{i=1}^I p_{ij}\right)^{1/I} = \beta_j \left(\prod_{i=1}^I \alpha_i \psi_{ij}\right)^{1/I}.
  \]
  We can now define the elements of $\mbf{P}_{\inte}$ as
  \[
    p_{ij}^{\inte}\propto\frac{p_{ij}}{g(row_i[\mbf{P}])g(col_j[\mbf{P}])} = \frac{\alpha_i \beta_j \psi_{ij}}{\alpha_i \left(\prod_{k=1}^I \beta_k \psi_{ik}\right)^{1/I}\beta_j \left(\prod_{k=1}^I \alpha_k \psi_{kj}\right)^{1/I}}.
  \]
  Note that the geometric means of $\alpha_i$ and $\beta_j$ over all $i$ and $j$ respectively are constants independent of any particular cell $(i,j)$.
  Therefore, we can simplify the expression for $p_{ij}^{\inte}$ as follows.
  \[
    p_{ij}^{\inte} \propto \frac{\psi_{ij}}{\left(\prod_{k=1}^I \psi_{ik}\right)^{1/I}\left(\prod_{k=1}^I \psi_{kj}\right)^{1/I}}.
  \]
  Similarly for $p_{ji}^{\inte}$,
  \[
    p_{ji}^{\inte} \propto \frac{\psi_{ji}}{\left(\prod_{k=1}^I \psi_{jk}\right)^{1/I}\left(\prod_{k=1}^I \psi_{ki}\right)^{1/I}}.
  \]
  Since $\psi_{ij} = \psi_{ji}$ for all $i, j\in\{1, \ldots, I\}$, we have
  \[
    p_{ij}^{\inte} = p_{ji}^{\inte}.
  \]
  This equality implies $\mbf{P}_{\inte} = T(\mbf{P}_{\inte})$.
  Thus, we have established the equivalence between our definition (Definition \ref{def:def2}) and Caussinus' definition.
\end{proof}
\subsection{Proof of Theorem 3}
\begin{theorem}[Subspace of quasi-symmetric PT]
  Let $\mcal{S}_{\qs}^{I^{2}}$ be the set of quasi-symmetric PTs.
  $\mcal{S}_{\qs}^{I^{2}}$ is a linear subspace of $\mcal{S}^{I^{2}}$.
  Its dimension is $(I-1)(I+4)/2$.
\end{theorem}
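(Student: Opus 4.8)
The plan is to realize $\mcal{S}_{\qs}^{I^{2}}$ as a perturbation sum of two subspaces whose dimensions are already on record, and then add those dimensions.

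First I would rephrase Definition \ref{def:def2} in terms of the four-fold decomposition of Theorem \ref{thm1}. Because the decomposition $\mbf{P}=\mbf{P}_{\ind}\oplus\mbf{P}_{\inte}$ is unique and $\mcal{S}^{I^{2}}_{\inte}$ is the orthogonal complement of $\mcal{S}^{I^{2}}_{\ind}$, a table is quasi-symmetric precisely when its interaction part is symmetric, i.e.\ $\mbf{P}_{\inte}\in\mcal{S}^{I^{2}}_{\inte}\cap\mcal{S}^{I^{2}}_{\sym}=\mcal{S}^{I^{2}}_{\syint}$, with $\mbf{P}_{\ind}$ otherwise unconstrained in $\mcal{S}^{I^{2}}_{\ind}$. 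Conversely, for $\mbf{A}\in\mcal{S}^{I^{2}}_{\ind}$ and $\mbf{B}\in\mcal{S}^{I^{2}}_{\syint}\subseteq\mcal{S}^{I^{2}}_{\inte}$, the orthogonality of $\mcal{S}^{I^{2}}_{\ind}$ and $\mcal{S}^{I^{2}}_{\inte}$ forces the decomposition of $\mbf{A}\oplus\mbf{B}$ to be exactly $(\mbf{A},\mbf{B})$, so $\mbf{A}\oplus\mbf{B}$ is quasi-symmetric. Hence
\[
  \mcal{S}_{\qs}^{I^{2}} = \{\,\mbf{A}\oplus\mbf{B} : \mbf{A}\in\mcal{S}^{I^{2}}_{\ind},\ \mbf{B}\in\mcal{S}^{I^{2}}_{\syint}\,\},
\]
the sum of two linear subspaces of the vector space $(\mcal{S}^{I^{2}},\oplus,\odot)$, which is again a linear subspace; this proves the first assertion.

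For the dimension I would observe that the sum is direct, since $\mcal{S}^{I^{2}}_{\ind}\cap\mcal{S}^{I^{2}}_{\syint}\subseteq\mcal{S}^{I^{2}}_{\ind}\cap\mcal{S}^{I^{2}}_{\inte}=\{\mcal{C}\mbf{1}\}$, the neutral element. Therefore $\dim\mcal{S}_{\qs}^{I^{2}}=\dim\mcal{S}^{I^{2}}_{\ind}+\dim\mcal{S}^{I^{2}}_{\syint}$, and from Section \ref{subsec2.2} with $I=J$ we have $\dim\mcal{S}^{I^{2}}_{\ind}=2(I-1)$. To get $\dim\mcal{S}^{I^{2}}_{\syint}$ I would use the orthogonal splitting $\mcal{S}^{I^{2}}_{\sym}=\mcal{S}^{I^{2}}_{\syind}\oplus\mcal{S}^{I^{2}}_{\syint}$ (the restriction of the independence/interaction decomposition to symmetric tables, valid by Theorem \ref{thm1}), so that $\dim\mcal{S}^{I^{2}}_{\syint}=\dim\mcal{S}^{I^{2}}_{\sym}-\dim\mcal{S}^{I^{2}}_{\syind}$; here $\dim\mcal{S}^{I^{2}}_{\sym}=(I-1)(I+2)/2$ is recorded in Section \ref{subsec2.3}, and $\dim\mcal{S}^{I^{2}}_{\syind}=I-1$ because a symmetric independence table has identical row and column geometric marginals and is thus parametrised by a single composition in $\mcal{S}^{I}$. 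This yields $\dim\mcal{S}^{I^{2}}_{\syint}=I(I-1)/2$, and hence
\[
  \dim\mcal{S}_{\qs}^{I^{2}} = 2(I-1)+\frac{I(I-1)}{2} = \frac{(I-1)(I+4)}{2},
\]
which also agrees with the classical count $I^{2}-1-(I-1)(I-2)/2$ for the dimension of the quasi-symmetry model.

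The perturbation and orthogonality bookkeeping is routine; the step that needs the most care is the dimension of $\mcal{S}^{I^{2}}_{\syint}$, in particular confirming that $\dim\mcal{S}^{I^{2}}_{\syind}=I-1$ (equivalently, in a direct clr-coordinate count, that the $I$ row-sum functionals are linearly independent on the space of symmetric $I\times I$ arrays and that the column-sum and clr closure conditions impose no further independent constraints, giving $I(I+1)/2-I=I(I-1)/2$). Once that value is pinned down, the result follows by additivity of dimension over the direct sum.
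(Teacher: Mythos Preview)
Your proposal is correct and follows essentially the same route as the paper: both arguments identify $\mcal{S}_{\qs}^{I^{2}}$ with the direct (orthogonal) sum $\mcal{S}^{I^{2}}_{\ind}\oplus\mcal{S}^{I^{2}}_{\syint}$ and then add dimensions $2(I-1)+I(I-1)/2$. You are slightly more explicit than the paper in verifying the converse inclusion and in deriving $\dim\mcal{S}^{I^{2}}_{\syint}=I(I-1)/2$ via $\dim\mcal{S}^{I^{2}}_{\sym}-\dim\mcal{S}^{I^{2}}_{\syind}$, whereas the paper simply asserts that value; otherwise the proofs coincide.
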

\begin{proof}
  If $\mbf{P}_{\inte}=T(\mbf{P}_{\inte})$, $\mbf{P}_{\skint} = \mbf{N}$, where $\mbf{N}$ is the additive identity that has the same entries.
  Thus, we can decompose $\mbf{P}$ into independent and symmetric-interaction parts,
  \[
    \mbf{P} = \mbf{P}_{\ind}\oplus\mbf{P}_{\syint},
  \]
  in a unique way.
  Therefore, $\mcal{S}_{\qs}^{I^{2}}$ is the direct sum of $\mcal{S}_{\ind}^{I^{2}}$ and $\mcal{S}_{\syint}^{I^{2}}$ and is indicated by
  \[
    \mcal{S}_{\qs}^{I^{2}} = \Set{\mbf{P}_{\ind}\oplus\mbf{P}_{\syint}\given\mbf{P}_{\ind}\in\mcal{S}_{\ind}^{I^{2}},\mbf{P}_{\syint}\in\mcal{S}_{\syint}^{I^{2}}}.
  \]
  For all $\mbf{P},\mbf{Q}\in\mcal{S}_{\qs}^{I^{2}}$, they can be decomposed into two parts,
  $\mbf{P}_{\ind},\mbf{Q}_{\ind}\in\mcal{S}_{\ind}^{I^{2}}$ and $\mbf{P}_{\syint}, \mbf{Q}_{\syint}\in\mcal{S}_{\syint}^{I^{2}}$.
  Both $\mcal{S}_{\ind}^{I^{2}}$ and $\mcal{S}_{\syint}^{I^{2}}$ are linear subspaces of $\mcal{S}^{I^{2}}$ and are closed under perturbation and powering.
  Thus, $\mcal{S}_{\qs}^{I^{2}}$ is also closed under them.
  The dimension of $\mcal{S}_{\qs}^{I^{2}}$ is the sum of the dimensions of $\mcal{S}_{\ind}^{I^{2}}$ and $\mcal{S}_{\syint}^{I^{2}}$.
  Therefore, the dimension of $\mcal{S}_{\qs}^{I^{2}}$ is $(2I-2)+I(I-1)/2 = (I-1)(I+4)/2$.
\end{proof}

\subsection{Proof of Theorem 4}
\begin{theorem}[Orthogonal projection onto quasi-symmetric PT]
Let $\mbf{P}\in\mcal{S}^{I^{2}}$ be an $I\times I$ table.
The closest quasi-symmetric PT to $\mbf{P}$ is $\mbf{P}_{\qs} = \mbf{P}_{\ind}\oplus\Big(0.5 \odot \big(\mbf{P}_{\inte} \oplus T(\mbf{P}_{\inte})\big)\Big)$ in the sense of the Aitchison distance in $\mcal{S}^{I^{2}}$.
\end{theorem}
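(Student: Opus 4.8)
The plan is to identify $\mbf{P}_{\qs}$ as the orthogonal projection of $\mbf{P}$ onto $\mcal{S}_{\qs}^{I^{2}}$ and then invoke the Hilbert projection theorem, which guarantees that this projection is the unique closest point in Aitchison distance. By Theorem~\ref{thm3} we have $\mcal{S}_{\qs}^{I^{2}} = \mcal{S}_{\ind}^{I^{2}}\oplus\mcal{S}_{\syint}^{I^{2}}$, and combining this with the orthogonal decomposition of Theorem~\ref{thm1} (using $\mcal{S}_{\ind}^{I^{2}} = \mcal{S}_{\syind}^{I^{2}}\oplus\mcal{S}_{\skind}^{I^{2}}$) gives $\mcal{S}_{\qs}^{I^{2}} = \mcal{S}_{\syind}^{I^{2}}\oplus\mcal{S}_{\skind}^{I^{2}}\oplus\mcal{S}_{\syint}^{I^{2}}$, an orthogonal sum of three of the four blocks of Theorem~\ref{thm1}. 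Hence its orthogonal complement in $\mcal{S}^{I^{2}}$ is exactly $\mcal{S}_{\skint}^{I^{2}}$, and it suffices to verify two things: that the proposed $\mbf{P}_{\qs}$ lies in $\mcal{S}_{\qs}^{I^{2}}$, and that $\mbf{P}\ominus\mbf{P}_{\qs}\in\mcal{S}_{\skint}^{I^{2}}$.

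First I would check membership. The table $0.5\odot\big(\mbf{P}_{\inte}\oplus T(\mbf{P}_{\inte})\big)$ lies in $\mcal{S}_{\inte}^{I^{2}}$, because $\mcal{S}_{\inte}^{I^{2}}$ is a linear subspace closed under perturbation and powering and is invariant under the transposition map $T$; it is moreover symmetric by construction, being the image of $\mbf{P}_{\inte}$ under the symmetrizing map of Subsection~\ref{subsec2.3}. Thus it belongs to $\mcal{S}_{\syint}^{I^{2}}$, and therefore $\mbf{P}_{\qs} = \mbf{P}_{\ind}\oplus\big(0.5\odot(\mbf{P}_{\inte}\oplus T(\mbf{P}_{\inte}))\big)\in\mcal{S}_{\ind}^{I^{2}}\oplus\mcal{S}_{\syint}^{I^{2}} = \mcal{S}_{\qs}^{I^{2}}$.

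Next I would compute the residual. Writing $\mbf{P} = \mbf{P}_{\ind}\oplus\mbf{P}_{\inte}$ and subtracting, the $\mbf{P}_{\ind}$ contribution cancels, so that
\[
  \mbf{P}\ominus\mbf{P}_{\qs} = \mbf{P}_{\inte}\ominus\Big(0.5\odot\big(\mbf{P}_{\inte}\oplus T(\mbf{P}_{\inte})\big)\Big).
\]
Applying the symmetric/skew-symmetric decomposition of Subsection~\ref{subsec2.3} to the table $\mbf{P}_{\inte}$, the right-hand side is precisely the skew-symmetric part of $\mbf{P}_{\inte}$, with cells proportional to $\big(p_{ij}^{\inte}/p_{ji}^{\inte}\big)^{1/2}$; in particular it lies in $\mcal{S}_{\skw}^{I^{2}}$. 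Since $\mbf{P}_{\inte}\in\mcal{S}_{\inte}^{I^{2}}$ and $\mcal{S}_{\inte}^{I^{2}}$ is closed under $\oplus$, $\odot$ and $T$, this residual also lies in $\mcal{S}_{\inte}^{I^{2}}$; hence it belongs to $\mcal{S}_{\inte}^{I^{2}}\cap\mcal{S}_{\skw}^{I^{2}} = \mcal{S}_{\skint}^{I^{2}}$. Combining the two steps, $\mbf{P}_{\qs}\in\mcal{S}_{\qs}^{I^{2}}$ and $\mbf{P}\ominus\mbf{P}_{\qs}$ lies in the orthogonal complement $\mcal{S}_{\skint}^{I^{2}}$, so by the characterization of orthogonal projection $\mbf{P}_{\qs}$ is the orthogonal projection of $\mbf{P}$ onto $\mcal{S}_{\qs}^{I^{2}}$ and, by the Hilbert projection theorem, the unique closest quasi-symmetric PT to $\mbf{P}$ in Aitchison distance.

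I expect the only delicate point to be the bookkeeping around the two nested decompositions: one must be sure that splitting $\mbf{P}_{\inte}$ into symmetric and skew parts yields elements of $\mcal{S}_{\syint}^{I^{2}}$ and $\mcal{S}_{\skint}^{I^{2}}$ (not merely of $\mcal{S}_{\sym}^{I^{2}}$ and $\mcal{S}_{\skw}^{I^{2}}$) and that the projections onto the independent subspace and onto the symmetric subspace commute, so that the residual is orthogonal to \emph{all} of $\mcal{S}_{\qs}^{I^{2}}$ rather than just to $\mcal{S}_{\syint}^{I^{2}}$. This is exactly the content of Theorem~\ref{thm1} together with the closure of $\mcal{S}_{\inte}^{I^{2}}$ under the Aitchison operations and transposition, so the argument reduces to citing that decomposition.
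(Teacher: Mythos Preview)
Your proof is correct and follows essentially the same approach as the paper: both arguments show that $\mbf{P}_{\qs}\in\mcal{S}_{\qs}^{I^{2}}$ and that the residual $\mbf{P}\ominus\mbf{P}_{\qs}$ lies in $\mcal{S}_{\skint}^{I^{2}}$, which is orthogonal to $\mcal{S}_{\qs}^{I^{2}}$. The only cosmetic difference is that you invoke the orthogonal-complement characterization via Theorem~\ref{thm1} and the Hilbert projection theorem, whereas the paper verifies the orthogonality condition $\langle\mbf{P}\ominus\mbf{P}_{\qs},\mbf{P}_{\qs}\ominus\mbf{Q}\rangle_{A}=0$ directly in $\clr$ coordinates.
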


\begin{proof}
%  Let $\mbf{P}_{\qs}^{*}$ be $\mbf{P}_{\ind}\oplus(0.5\odot(\mbf{P}_{\inte}\oplus T(\mbf{P}_{\inte})))$.
%  Then, the $(i,j)$-th cells of $\mbf{P}_{\qs}^{*}$ is
%The $(i,j)$-th cells of $\mbf{P}_{\qs}$ is
%  \[
%    p_{ij}^{\qs}\propto\left(p_{ij}p_{ji}\frac{g(row_{i}[\mbf{P}])g(col_{j}[\mbf{P}])}{g(row_{j}[\mbf{P}])g(col_{i}[\mbf{P}])}\right)^{1/2}.
%  \]
  To prove this theorem, for any $\mbf{Q}\in\mcal{S}_{\qs}^{I^2}$, we must show $\langle\mbf{P}\ominus\mbf{P}_{\qs}, \mbf{P}_{\qs}\ominus\mbf{Q}\rangle_{A} = 0$.
  The $(i,j)$-th entry of $\clr(\mbf{P}\ominus\mbf{P}_{\qs})$ is given as
  \begin{align*}
    \clr_{ij}(\mbf{P}\ominus\mbf{P}_{\qs}) 
    &= \clr_{ij}(\mbf{P}) - \clr_{ij}(\mbf{P}_{\qs}) \\
    &= \clr_{ij}(\mbf{P}_{\inte}) - 0.5\big(\clr_{ij}(\mbf{P}_{\inte}) + \clr_{ji}(\mbf{P}_{\inte})\big).
%    &= \log\left(\frac{p_{ij}}{p_{ji}}\frac{g(row_{j}[\mbf{P}])g(col_{i}[\mbf{P}])}{g(row_{i}[\mbf{P}])g(col_{j}[\mbf{P}])}\right)^{1/2},
  \end{align*}
  Hence, $\mbf{P}\ominus\mbf{P}_{\qs}$ is, by construction, a pure interaction component.
  Therefore it lies in the interaction subspace $\mcal{S}_{\inte}^{I^{2}}$.
  Moreover, for any $i,j\in\{1,\ldots,I\}$, we have
  \[
  \clr_{ji}\!\left(\mbf{P}\ominus\mbf{P}_{\qs}\right)=-\,\clr_{ij}\!\left(\mbf{P}\ominus\mbf{P}_{\qs}\right),
  \]
  indicating the table is skew-symmetric and thus belongs to the subspace $\mcal{S}_{\skw}^{I^{2}}$ as well.
  In other words, the table lies in $\mcal{S}_{\inte}^{I^{2}}\cap\mcal{S}_{\skw}^{I^{2}} =\mcal{S}_{\skint}^{I^{2}}$.  
  
  Since \(\mbf{Q}\in\mcal{S}_{\qs}^{I^{2}}\), its interaction component is symmetric, that is, for any $i,j\in\{1,\ldots,I\}$,
  \[
    \clr_{ij}(\mbf{Q}_{\inte})=\clr_{ji}(\mbf{Q}_{\inte}).
  \]
  Hence
  \[
  \clr_{ij}(\mbf{Q}_{\inte}) = 0.5\big(\clr_{ij}(\mbf{Q}_{\inte})+\clr_{ji}(\mbf{Q}_{\inte})\big).
  \]
  Consequently, separating the independent part from the interaction part, we can rewrite the difference as
  \begin{align*}
    \clr_{ij}(\mbf{P}_{\qs}\ominus\mbf{Q}) = \clr_{ij}(\mbf{P}_{\ind}\ominus\mbf{Q}_{\ind}) + 0.5\big(\clr_{ij}(\mbf{P}_{\inte}\ominus\mbf{Q}_{\inte}) + \clr_{ji}(\mbf{P}_{\inte}\ominus\mbf{Q}_{\inte})\big).
  \end{align*}

  The first term is an element of the independent subspace $\mcal{S}_{\ind}^{I^{2}}$, whereas the second term lies simultaneously in the interaction and symmetric subspaces, that is, in $\mcal{S}_{\inte}^{I^{2}}\cap\mcal{S}_{\sym}^{I^{2}} =\mcal{S}_{\syint}^{I^{2}}$.
  
  Then we have the Aitchison inner product of $\mbf{P}\ominus\mbf{P}_{\qs}$ and $\mbf{P}_{\qs}\ominus\mbf{Q}$ as follows.
  \begin{align*}
    &\langle\mbf{P}\ominus\mbf{P}_{\qs}, \mbf{P}_{\qs}\ominus\mbf{Q}\rangle_{A} \\
    &\quad= \sum_{i=1}^{I}\sum_{j=1}^{I}\clr_{ij}(\mbf{P}\ominus\mbf{P}_{\qs})\clr_{ij}(\mbf{P}_{\qs}\ominus\mbf{Q}) \\
    &\quad= \sum_{i=1}^{I}\sum_{j=1}^{I}\clr_{ij}(\mbf{P}\ominus\mbf{P}_{\qs})\clr_{ij}(\mbf{P}_{\ind}\ominus\mbf{Q}_{\ind}) \\
    &\qquad+ \sum_{i=1}^{I}\sum_{j=1}^{I}\clr_{ij}(\mbf{P}\ominus\mbf{P}_{\qs})\Big(0.5\big(\clr_{ij}(\mbf{P}_{\inte}\ominus\mbf{Q}_{\inte}) + \clr_{ji}(\mbf{P}_{\inte}\ominus\mbf{Q}_{\inte})\big)\Big).
  \end{align*}
  Here, the first inner product is zero because the interaction subspace $\mcal{S}_{\inte}^{I^{2}}$ is orthogonal to the independent subspace \(\mcal{S}_{\ind}^{I^{2}}\).
  The second inner product is zero because, inside the interaction subspace, the skew-symmetric component $\mcal{S}_{\skint}^{I^{2}}$ is orthogonal to the symmetric component $\mcal{S}_{\syint}^{I^{2}}$.
  Hence
  \[
    \langle\mbf{P}\ominus\mbf{P}_{\qs}, \mbf{P}_{\qs}\ominus\mbf{Q}\rangle_{A} = 0,
  \]
  which completes the proof.
\end{proof}

\subsection{Proof of Theorem 5}
\begin{theorem}[Subspace of geometric marginal homogeneity PT]
  Let $\mcal{S}_{\gmh}^{I^{2}}$ be the set of geometric marginal homogeneous PTs.
  $\mcal{S}_{\gmh}^{I^{2}}$ is a linear subspace of $\mcal{S}^{I^{2}}$. Its dimension is $I(I-1)$.
\end{theorem}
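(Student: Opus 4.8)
The plan is to mirror the proof of Theorem~\ref{thm3}, interchanging the roles of the independent and interaction parts. First I would translate the defining condition of Definition~\ref{def4} into a statement about the four-way decomposition of Theorem~\ref{thm1}. Writing $\mbf{P}=\mbf{P}_{\syind}\oplus\mbf{P}_{\skind}\oplus\mbf{P}_{\syint}\oplus\mbf{P}_{\skint}$, the independent part is $\mbf{P}_{\ind}=\mbf{P}_{\syind}\oplus\mbf{P}_{\skind}$ with $\mbf{P}_{\syind}$ symmetric and $\mbf{P}_{\skind}$ skew-symmetric, so $T(\mbf{P}_{\ind})=\mbf{P}_{\syind}\ominus\mbf{P}_{\skind}$ and the condition $\mbf{P}_{\ind}=T(\mbf{P}_{\ind})$ holds if and only if $\mbf{P}_{\skind}=\mbf{N}$, the additive identity. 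Hence $\mcal{S}_{\gmh}^{I^{2}}$ is exactly the set of PTs admitting a representation $\mbf{P}=\mbf{P}_{\syind}\oplus\mbf{P}_{\inte}$ with $\mbf{P}_{\syind}\in\mcal{S}_{\syind}^{I^{2}}$ and $\mbf{P}_{\inte}\in\mcal{S}_{\inte}^{I^{2}}$, and this representation is unique because the orthogonal decomposition of Theorem~\ref{thm1} is. Therefore
\[
  \mcal{S}_{\gmh}^{I^{2}}=\Set{\mbf{P}_{\syind}\oplus\mbf{P}_{\inte}\given\mbf{P}_{\syind}\in\mcal{S}_{\syind}^{I^{2}},\ \mbf{P}_{\inte}\in\mcal{S}_{\inte}^{I^{2}}},
\]
an internal, orthogonal direct sum of $\mcal{S}_{\syind}^{I^{2}}$ and $\mcal{S}_{\inte}^{I^{2}}$.

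Second, I would verify that $\mcal{S}_{\gmh}^{I^{2}}$ is a linear subspace. The subspace $\mcal{S}_{\inte}^{I^{2}}$ is linear by \citet{egozcue_independence_2015}, and $\mcal{S}_{\syind}^{I^{2}}=\mcal{S}_{\ind}^{I^{2}}\cap\mcal{S}_{\sym}^{I^{2}}$ is linear as an intersection of two linear subspaces; both therefore contain $\mbf{N}$ and are closed under $\oplus$ and $\odot$. Since perturbing or powering $\mbf{P}=\mbf{P}_{\syind}\oplus\mbf{P}_{\inte}$ leaves the two components inside their respective subspaces, $\mcal{S}_{\gmh}^{I^{2}}$ is closed under both operations, hence a linear subspace.

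Third, for the dimension I would use $\dim\mcal{S}_{\gmh}^{I^{2}}=\dim\mcal{S}_{\syind}^{I^{2}}+\dim\mcal{S}_{\inte}^{I^{2}}$, valid because the sum is direct. The interaction subspace has dimension $(I-1)^{2}$, as recalled in Subsection~\ref{subsec2.2}. For $\dim\mcal{S}_{\syind}^{I^{2}}$ I would argue that an independent table $row(\mbf{P})\oplus col(\mbf{P})$ is symmetric if and only if its row and column geometric marginals coincide: in the clr-plane such a table has entries of the additive form $\bar r_i+\bar c_j$ with $\sum_i\bar r_i=\sum_j\bar c_j=0$, and symmetry forces $\bar r_i-\bar c_i$ to be constant, which summing over $i$ must equal $0$, so $\bar r_i=\bar c_i$ for all $i$; hence such tables are parameterized by a single marginal composition on $I$ parts, giving $\dim\mcal{S}_{\syind}^{I^{2}}=I-1$. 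Equivalently, one may read off $\dim\mcal{S}_{\syind}^{I^{2}}=\dim\mcal{S}_{\sym}^{I^{2}}-\dim\mcal{S}_{\syint}^{I^{2}}=(I-1)(I+2)/2-I(I-1)/2=I-1$, using the value of $\dim\mcal{S}_{\syint}^{I^{2}}$ that appears in the proof of Theorem~\ref{thm3}. Combining, $\dim\mcal{S}_{\gmh}^{I^{2}}=(I-1)+(I-1)^{2}=I(I-1)$.

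I expect the only nontrivial step to be the computation of $\dim\mcal{S}_{\syind}^{I^{2}}$: one must check that imposing symmetry on an independent table removes exactly $I-1$ degrees of freedom, neither more nor fewer, i.e.\ that the column geometric marginal is forced to equal the row geometric marginal and that this is the only constraint incurred. Everything else is routine once the direct-sum decomposition $\mcal{S}_{\gmh}^{I^{2}}=\mcal{S}_{\syind}^{I^{2}}\oplus\mcal{S}_{\inte}^{I^{2}}$ is in hand.
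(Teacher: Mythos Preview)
Your proposal is correct and follows essentially the same approach as the paper's proof: identify $\mcal{S}_{\gmh}^{I^{2}}$ as the orthogonal direct sum $\mcal{S}_{\syind}^{I^{2}}\oplus\mcal{S}_{\inte}^{I^{2}}$ via the condition $\mbf{P}_{\skind}=\mbf{N}$, check closure under perturbation and powering, and add the dimensions $(I-1)+(I-1)^{2}=I(I-1)$. You actually go further than the paper by justifying $\dim\mcal{S}_{\syind}^{I^{2}}=I-1$ (both directly in clr coordinates and by subtraction), whereas the paper simply asserts this value.
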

\begin{proof}
  If $\mbf{P}_{\ind}=T(\mbf{P}_{\ind})$, $\mbf{P}_{\skind} = \mbf{N}$, where $\mbf{N}$ is the additive identity that has the same entries.
  Thus, we can decompose $\mbf{P}$ into independent symmetric and interaction parts,
  \[
    \mbf{P} = \mbf{P}_{\syind}\oplus\mbf{P}_{\inte},
  \]
  in a unique way.
  Therefore, $\mcal{S}_{\gmh}^{I^{2}}$ is the direct sum of $\mcal{S}_{\syind}^{I^{2}}$ and $\mcal{S}_{\inte}^{I^{2}}$ and indicated by
  \[
    \mcal{S}_{\gmh}^{I^{2}} = \Set{\mbf{P}_{\syind}\oplus\mbf{P}_{\inte}\given\mbf{P}_{\syind}\in\mcal{S}_{\syind}^{I^{2}},\mbf{P}_{\inte}\in\mcal{S}_{\inte}^{I^{2}}}.
  \]
  For all $\mbf{P},\mbf{Q}\in\mcal{S}_{\gmh}^{I^{2}}$, they can be decomposed into two parts, $\mbf{P}_{\syind},\mbf{Q}_{\syind}\in\mcal{S}_{\syind}^{I^{2}}$ and $\mbf{P}_{\inte}, \mbf{Q}_{\inte}\in\mcal{S}_{\inte}^{I^{2}}$.
  Both $\mcal{S}_{\syind}^{I^{2}}$ and $\mcal{S}_{\inte}^{I^{2}}$ are linear subspaces of $\mcal{S}^{I^{2}}$ and are closed under perturbation and powering.
  Thus, $\mcal{S}_{\gmh}^{I^{2}}$ is also closed under them.
  The dimension of $\mcal{S}_{\gmh}^{I^{2}}$ is the sum of the dimensions of $\mcal{S}_{\syind}^{I^{2}}$ (which is $I-1$) and $\mcal{S}_{\inte}^{I^{2}}$ (which is $I^2-2I+1$).
  Therefore, the dimension of $\mcal{S}_{\gmh}^{I^{2}}$ is $(I-1) + (I^2-2I+1) = I(I-1)$.
\end{proof}

\subsection{Proof of Theorem 6}
\begin{theorem}[Orthogonal projection onto geometric marginal homogeneous PT]
Let $\mbf{P}\in\mcal{S}^{I^{2}}$ be an $I\times I$ table. The closest geometric marginal homogeneous PT to $\mbf{P}$ is $\mbf{P}_{\gmh} = \Big(0.5 \odot \big( \mbf{P}_{\ind} \oplus T(\mbf{P}_{\ind}) \big) \Big) \oplus \mbf{P}_{\inte}$ in the sense of the Aitchison distance in $\mcal{S}^{I^{2}}$.
\end{theorem}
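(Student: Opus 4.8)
The plan is to mirror the argument used for Theorem \ref{thm4}: I will exhibit $\mbf{P}_{\gmh}$ as the orthogonal projection of $\mbf{P}$ onto the linear subspace $\mcal{S}_{\gmh}^{I^{2}}$ by verifying the orthogonality condition $\langle\mbf{P}\ominus\mbf{P}_{\gmh}, \mbf{P}_{\gmh}\ominus\mbf{Q}\rangle_{A}=0$ for every $\mbf{Q}\in\mcal{S}_{\gmh}^{I^{2}}$. Together with the fact that $\mbf{P}_{\gmh}$ itself belongs to $\mcal{S}_{\gmh}^{I^{2}}$ and the linearity of that subspace (Theorem \ref{thm5}), the Hilbert projection theorem then identifies $\mbf{P}_{\gmh}$ as the unique minimizer of the Aitchison distance from $\mbf{P}$ to $\mcal{S}_{\gmh}^{I^{2}}$.

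First I would check membership. Set $\mbf{P}_{\syind}:=0.5\odot\big(\mbf{P}_{\ind}\oplus T(\mbf{P}_{\ind})\big)$. Since $\mcal{S}_{\ind}^{I^{2}}$ is a linear subspace closed under perturbation and powering, and transposition carries an independent table to an independent table (it merely interchanges the row and column geometric marginals), we have $\mbf{P}_{\syind}\in\mcal{S}_{\ind}^{I^{2}}$; moreover it is symmetric by construction, so $\mbf{P}_{\syind}\in\mcal{S}_{\syind}^{I^{2}}$. By uniqueness of the independent–interaction decomposition, $\mbf{P}_{\gmh}=\mbf{P}_{\syind}\oplus\mbf{P}_{\inte}$ is precisely that decomposition of $\mbf{P}_{\gmh}$; in particular its independent part $\mbf{P}_{\syind}$ is symmetric, so $\mbf{P}_{\gmh}\in\mcal{S}_{\gmh}^{I^{2}}$ by the characterization in the proof of Theorem \ref{thm5}.

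Next I would pass to the clr-plane. A short computation gives
\[
  \clr_{ij}(\mbf{P}\ominus\mbf{P}_{\gmh})=\clr_{ij}(\mbf{P}_{\ind})-0.5\big(\clr_{ij}(\mbf{P}_{\ind})+\clr_{ji}(\mbf{P}_{\ind})\big)=0.5\big(\clr_{ij}(\mbf{P}_{\ind})-\clr_{ji}(\mbf{P}_{\ind})\big),
\]
so $\mbf{P}\ominus\mbf{P}_{\gmh}$ is a pure independent component and is skew-symmetric; that is, it lies in $\mcal{S}_{\ind}^{I^{2}}\cap\mcal{S}_{\skw}^{I^{2}}=\mcal{S}_{\skind}^{I^{2}}$ (indeed $\mbf{P}\ominus\mbf{P}_{\gmh}=\mbf{P}_{\skind}$). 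Since $\mbf{Q}\in\mcal{S}_{\gmh}^{I^{2}}$ has symmetric independent part, I would write
\[
  \clr_{ij}(\mbf{P}_{\gmh}\ominus\mbf{Q})=0.5\big(\clr_{ij}(\mbf{P}_{\ind}\ominus\mbf{Q}_{\ind})+\clr_{ji}(\mbf{P}_{\ind}\ominus\mbf{Q}_{\ind})\big)+\clr_{ij}(\mbf{P}_{\inte}\ominus\mbf{Q}_{\inte}),
\]
whose first summand lies in $\mcal{S}_{\syind}^{I^{2}}$ and whose second lies in $\mcal{S}_{\inte}^{I^{2}}$. Expanding the Aitchison inner product $\sum_{i,j}\clr_{ij}(\mbf{P}\ominus\mbf{P}_{\gmh})\clr_{ij}(\mbf{P}_{\gmh}\ominus\mbf{Q})$ and using that $\mcal{S}_{\skind}^{I^{2}}$ is orthogonal both to $\mcal{S}_{\syind}^{I^{2}}$ (skew-symmetric versus symmetric within the independent subspace) and to $\mcal{S}_{\inte}^{I^{2}}$ (independent versus interaction) — the orthogonality relations underlying Theorem \ref{thm1} — every term vanishes, which yields $\langle\mbf{P}\ominus\mbf{P}_{\gmh}, \mbf{P}_{\gmh}\ominus\mbf{Q}\rangle_{A}=0$.

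The argument is essentially parallel to that of Theorem \ref{thm4}, so I do not expect a substantial obstacle; the only step requiring care is the bookkeeping that the independent–interaction projection commutes with transposition, which is precisely what guarantees $(\mbf{P}_{\gmh})_{\ind}=\mbf{P}_{\syind}$ and $(\mbf{P}_{\gmh})_{\inte}=\mbf{P}_{\inte}$, and hence that $\mbf{P}\ominus\mbf{P}_{\gmh}$ reduces to a pure skew-symmetric independent component.
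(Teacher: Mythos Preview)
Your proposal is correct and follows essentially the same route as the paper's own proof: compute $\clr_{ij}(\mbf{P}\ominus\mbf{P}_{\gmh})$ to see it lies in $\mcal{S}_{\skind}^{I^{2}}$, decompose $\clr_{ij}(\mbf{P}_{\gmh}\ominus\mbf{Q})$ into its $\mcal{S}_{\syind}^{I^{2}}$ and $\mcal{S}_{\inte}^{I^{2}}$ pieces, and invoke the mutual orthogonality of these subspaces. Your explicit verification that $\mbf{P}_{\gmh}\in\mcal{S}_{\gmh}^{I^{2}}$ (via the commutation of transposition with the independent--interaction projection) is a useful addition that the paper leaves implicit.
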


\begin{proof}
%The $(i,j)$-th cells of $\mbf{P}_{\gmh}$ is
%  \[
%    p_{ij}^{\gmh}\propto p_{ij}\left(\frac{g(row_{j}[\mbf{P}])g(col_{i}[\mbf{P}])}{g(row_{i}[\mbf{P}])g(col_{j}[\mbf{P}])}\right)^{1/2}.
%  \]
  To prove this theorem, for any $\mbf{Q}\in\mcal{S}_{\gmh}^{I^2}$, we must show $\langle\mbf{P}\ominus\mbf{P}_{\gmh}, \mbf{P}_{\gmh}\ominus\mbf{Q}\rangle_{A} = 0$.
  The $(i,j)$-th entry of $\clr(\mbf{P}\ominus\mbf{P}_{\gmh})$ is given as
  \begin{align*}
    \clr_{ij}(\mbf{P}\ominus\mbf{P}_{\gmh}) 
    &= \clr_{ij}(\mbf{P}) - \clr_{ij}(\mbf{P}_{\gmh}) \\
    &= \clr_{ij}(\mbf{P}_{\ind}) - 0.5\big(\clr_{ij}(\mbf{P}_{\ind}) + \clr_{ji}(\mbf{P}_{\ind})\big).
  \end{align*}
  Hence, $\mbf{P}\ominus\mbf{P}_{\gmh}$ is, by construction, a pure independent component.
  Therefore it lies in the independent subspace $\mcal{S}_{\ind}^{I^{2}}$.
  Moreover, for any $i,j\in\{1,\ldots,I\}$, we have
  \[
    \clr_{ji}\!\left(\mbf{P}\ominus\mbf{P}_{\gmh}\right)=-\,\clr_{ij}\!\left(\mbf{P}\ominus\mbf{P}_{\gmh}\right),
  \]
  indicating the table is skew-symmetric and thus belongs to the subspace $\mcal{S}_{\skw}^{I^{2}}$ as well.
  In other words, the table lies in $\mcal{S}_{\ind}^{I^{2}}\cap\mcal{S}_{\skw}^{I^{2}} =\mcal{S}_{\skind}^{I^{2}}$.  
  
  Since \(\mbf{Q}\in\mcal{S}_{\gmh}^{I^{2}}\), its independent component is symmetric, so for all $i,j\in\{1,\ldots,I\}$,
  \[
    \clr_{ij}(\mbf{Q}_{\ind})=\clr_{ji}(\mbf{Q}_{\ind}).
  \]
  Hence
  \[
    \clr_{ij}(\mbf{Q}_{\ind}) = 0.5\big(\clr_{ij}(\mbf{Q}_{\ind})+\clr_{ji}(\mbf{Q}_{\ind})\big).
  \]
  Consequently, separating the interaction part from the independent part, we can rewrite the difference as
  \begin{align*}
    \clr_{ij}(\mbf{P}_{\gmh}\ominus\mbf{Q}) = \clr_{ij}(\mbf{P}_{\inte}\ominus\mbf{Q}_{\inte}) + 0.5\big(\clr_{ij}(\mbf{P}_{\ind}\ominus\mbf{Q}_{\ind}) + \clr_{ji}(\mbf{P}_{\ind}\ominus\mbf{Q}_{\ind})\big).
  \end{align*}

  The first term is an element of the interaction subspace $\mcal{S}_{\inte}^{I^{2}}$, whereas the second term lies simultaneously in the independent and symmetric subspaces, that is, in $\mcal{S}_{\ind}^{I^{2}}\cap\mcal{S}_{\sym}^{I^{2}} =\mcal{S}_{\syind}^{I^{2}}$.
  
  Then we have the Aitchison inner product of $\mbf{P}\ominus\mbf{P}_{\gmh}$ and $\mbf{P}_{\gmh}\ominus\mbf{Q}$ as follows.
  \begin{align*}
    &\langle\mbf{P}\ominus\mbf{P}_{\gmh}, \mbf{P}_{\gmh}\ominus\mbf{Q}\rangle_{A} \\
    &\quad= \sum_{i=1}^{I}\sum_{j=1}^{I}\clr_{ij}(\mbf{P}\ominus\mbf{P}_{\gmh})\clr_{ij}(\mbf{P}_{\gmh}\ominus\mbf{Q}) \\
    &\quad= \sum_{i=1}^{I}\sum_{j=1}^{I}\clr_{ij}(\mbf{P}\ominus\mbf{P}_{\gmh})\clr_{ij}(\mbf{P}_{\inte}\ominus\mbf{Q}_{\inte}) \\
    &\qquad+ \sum_{i=1}^{I}\sum_{j=1}^{I}\clr_{ij}(\mbf{P}\ominus\mbf{P}_{\gmh})\Big(0.5\big(\clr_{ij}(\mbf{P}_{\ind}\ominus\mbf{Q}_{\ind}) + \clr_{ji}(\mbf{P}_{\ind}\ominus\mbf{Q}_{\ind})\big)\Big).
  \end{align*}
  Here, the first inner product is zero because the independent subspace $\mcal{S}_{\ind}^{I^{2}}$ is orthogonal to the interaction subspace \(\mcal{S}_{\inte}^{I^{2}}\).
  The second inner product is zero because, inside the independent subspace, the skew-symmetric component $\mcal{S}_{\skind}^{I^{2}}$ is orthogonal to the symmetric component $\mcal{S}_{\syind}^{I^{2}}$.
  Hence
  \[
    \langle\mbf{P}\ominus\mbf{P}_{\gmh}, \mbf{P}_{\gmh}\ominus\mbf{Q}\rangle_{A} = 0,
  \]
  which completes the proof.
\end{proof}

\subsection{Proof of Theorem 7}
\begin{theorem}[Orthogonal decomposition of simplicial skewness]
  Given a PT $\mbf{P}$, its simplicial skewness $E^{2}(\mbf{P})$ can be decomposed into simplicial quasi-skewness $Q^{2}(\mbf{P})$ and geometric marginal heterogeneity $M^{2}(\mbf{P})$, that is,
  \begin{align*}
    E^{2}(\mbf{P}) = Q^{2}(\mbf{P}) + M^{2}(\mbf{P}).
  \end{align*}
\end{theorem}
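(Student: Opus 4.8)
The plan is to deduce the identity from the Pythagorean theorem in the Euclidean geometry that the clr transformation induces on $\mcal{S}^{I^{2}}$, after observing that the skew-symmetric part of $\mbf{P}$ splits exactly into the two components whose squared norms are $Q^{2}(\mbf{P})$ and $M^{2}(\mbf{P})$. Concretely, the three simplicial quantities are already defined as squared Aitchison norms, $E^{2}(\mbf{P})=\|\mbf{P}_{\skw}\|_{A}^{2}$, $Q^{2}(\mbf{P})=\|\mbf{P}_{\skint}\|_{A}^{2}$, and $M^{2}(\mbf{P})=\|\mbf{P}_{\skind}\|_{A}^{2}$, so the whole statement reduces to showing $\mbf{P}_{\skw}=\mbf{P}_{\skind}\oplus\mbf{P}_{\skint}$ with the two summands Aitchison-orthogonal, and then invoking $\|\mbf{A}\oplus\mbf{B}\|_{A}^{2}=\|\mbf{A}\|_{A}^{2}+\|\mbf{B}\|_{A}^{2}$ for orthogonal $\mbf{A},\mbf{B}$.

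First I would establish the splitting in clr coordinates. From the proof of Theorem~\ref{thm4} one has $\clr_{ij}(\mbf{P}_{\skint})=0.5\big(\clr_{ij}(\mbf{P}_{\inte})-\clr_{ji}(\mbf{P}_{\inte})\big)$, and from the proof of Theorem~\ref{thm6} one has $\clr_{ij}(\mbf{P}_{\skind})=0.5\big(\clr_{ij}(\mbf{P}_{\ind})-\clr_{ji}(\mbf{P}_{\ind})\big)$. Adding these and using $\clr_{ij}(\mbf{P})=\clr_{ij}(\mbf{P}_{\ind})+\clr_{ij}(\mbf{P}_{\inte})$ together with $\clr_{ij}(\mbf{P}_{\skw})=0.5\big(\clr_{ij}(\mbf{P})-\clr_{ji}(\mbf{P})\big)$ gives $\clr_{ij}(\mbf{P}_{\skw})=\clr_{ij}(\mbf{P}_{\skind})+\clr_{ij}(\mbf{P}_{\skint})$ for all $i,j$, i.e.\ $\mbf{P}_{\skw}=\mbf{P}_{\skind}\oplus\mbf{P}_{\skint}$ since clr is an isomorphism taking $\oplus$ to ordinary addition. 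Alternatively, this follows abstractly by grouping the four-term decomposition of Theorem~\ref{thm1} into its symmetric half $\mbf{P}_{\syind}\oplus\mbf{P}_{\syint}$ and its skew-symmetric half $\mbf{P}_{\skind}\oplus\mbf{P}_{\skint}$ (closure of $\mcal{S}^{I^{2}}_{\sym}$ and $\mcal{S}^{I^{2}}_{\skw}$ under perturbation makes these symmetric and skew-symmetric respectively) and appealing to uniqueness of the symmetric/skew-symmetric decomposition.

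Next I would record the orthogonality. Since $\mbf{P}_{\skind}\in\mcal{S}^{I^{2}}_{\skind}\subseteq\mcal{S}^{I^{2}}_{\ind}$ and $\mbf{P}_{\skint}\in\mcal{S}^{I^{2}}_{\skint}\subseteq\mcal{S}^{I^{2}}_{\inte}$, and since $\mcal{S}^{I^{2}}_{\inte}$ is by construction the Aitchison-orthogonal complement of $\mcal{S}^{I^{2}}_{\ind}$, we get $\langle\mbf{P}_{\skind},\mbf{P}_{\skint}\rangle_{A}=0$. Combining with the previous step, $\|\mbf{P}_{\skw}\|_{A}^{2}=\|\mbf{P}_{\skind}\oplus\mbf{P}_{\skint}\|_{A}^{2}=\|\mbf{P}_{\skind}\|_{A}^{2}+\|\mbf{P}_{\skint}\|_{A}^{2}$, which is exactly $E^{2}(\mbf{P})=M^{2}(\mbf{P})+Q^{2}(\mbf{P})$.

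The argument is short, so there is no serious obstacle; the only point needing a little care is the identity $\mbf{P}_{\skw}=\mbf{P}_{\skind}\oplus\mbf{P}_{\skint}$, that is, checking that the symmetric/skew-symmetric split and the independent/interaction split are mutually compatible. I expect the clr-coordinate computation above to handle this cleanly, and everything else follows from orthogonality of the independent and interaction subspaces together with the Pythagorean identity.
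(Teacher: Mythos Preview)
Your proposal is correct and follows essentially the same approach as the paper: establish $\mbf{P}_{\skw}=\mbf{P}_{\skind}\oplus\mbf{P}_{\skint}$ (the paper simply invokes the four-term decomposition of Theorem~\ref{thm1}), use the orthogonality inherited from $\mcal{S}^{I^{2}}_{\ind}\perp\mcal{S}^{I^{2}}_{\inte}$, and apply the Pythagorean identity. If anything, your version is slightly more explicit in justifying the splitting via clr coordinates, but the underlying argument is identical.
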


\begin{proof}
For any $\mbf{P} \in \mcal{S}^{I^2}$, we can write its unique orthogonal decomposition
\[
  \mbf{P} = \mbf{P}_{\syind} \oplus \mbf{P}_{\skind} \oplus \mbf{P}_{\syint} \oplus \mbf{P}_{\skint}.
\]
The skew-symmetric component of $\mbf{P}$ is given by
\[
  \mbf{P}_{\skw} = \mbf{P}_{\skind} \oplus \mbf{P}_{\skint}.
\]
By definition, the simplicial skewness is
\[
  E^2(\mbf{P}) = \|\mbf{P}_{\skw}\|_{A}^2 = \|\mbf{P}_{\skind} \oplus \mbf{P}_{\skint}\|_{A}^2.
\]
Since $\mbf{P}_{\skind}$ and $\mbf{P}_{\skint}$ are orthogonal in Aitchison geometry, we have
\[
  \|\mbf{P}_{\skw}\|_{A}^2 = \|\mbf{P}_{\skind}\|_{A}^2 + \|\mbf{P}_{\skint}\|_{A}^2.
\]
This directly implies
\[
  E^2(\mbf{P}) = Q^2(\mbf{P}) + M^2(\mbf{P}).
\]
Thus, we have shown that the simplicial skewness can be orthogonally decomposed into simplicial quasi-skewness and geometric marginal heterogeneity.
\end{proof}

\end{appendices}

\bibliography{references}

\end{document}